\newtheorem{thm}[subsection]{Theorem}
\newtheorem{pr}[subsection]{Proposition}
\newtheorem{lm}[subsection]{Lemma}
\newtheorem{cor}[subsection]{Corollary}
\newcommand{\sm}{\raisebox{2.33pt}{~\rule{6.4pt}{1.3pt}~}}
\newtheorem{df}[subsection]{Definition}
\theoremstyle{definition}
\numberwithin{equation}{subsection}
\def\Gal{\text{\rm Gal}}
\def\Pic{\text{\rm Pic}}
\def\Z{\mathbf Z}
\def\Q{\mathbf Q}
\def\R{\mathbf R}
\def\C{\mathbf C}
\def\P{\mathbf P}
\def\F{\mathbf F}
\def\n{\noindent}
\begin{document}

\title
{The discriminant
and the determinant \\
of
a hypersurface of 
even dimension}
\author{Takeshi Saito}
\address{Department of Mathematical Sciences, 
University of Tokyo, Tokyo 153-8914, Japan}
\email{t-saito@ms.u-tokyo.ac.jp}
\date{}

\begin{abstract}
For a smooth hypersurface
of even dimension,
the quadratic character
of the absolute Galois
group defined by
the determinant
of the $\ell$-adic cohomology
of middle dimension
is computed via
the square root
of the discriminant
of a defining polynomial
of the hypersurface.
\end{abstract}

\maketitle

Let $k$ be a field, $\bar k$ an algebraic closure of  $k$  and  $k_s$
   the maximal separable extension of  $k$  contained in $\bar k$. Let
$\Gamma_k =  \Gal(k_s/k) = {\rm Aut}_k(\bar k)$.

Let $X$ be a proper
smooth variety
of even dimension $n$
over $k$.
If $\ell$ is a prime number
which is
invertible in $k$,
the $\ell$-adic cohomology
$V=H^n(X_{\bar k},
{\Q}_\ell(\frac n2))$
defines an orthogonal
representation
of the absolute Galois group
$\Gamma_k={\rm Gal}(k_s/k)$.
The determinant
$$\det V : \Gamma_k \to \{\pm1\}
\subset 
{\Q}_\ell^\times$$
is independent of the choice of $\ell$ (see Corollary \ref{corCb}).

Assume that
$X$ is a smooth hypersurface
of degree $d$
in a projective space
of dimension $n+1$,
and let  $f$  be a homogeneous polynomial 
defining it.
Let ${\rm disc}_d(f)$ be the divided
discriminant of $f$ (see  \S\ref{sdi}).
Assume further that
the characteristic of $k$
is not $2$. We shall prove below (Theorem \ref{thmsign}):

\smallskip

\noindent {\bf Theorem}. {\it The quadratic character $\det V $ is defined by the square root of $\varepsilon(n,d)\cdot{\rm disc}_d(f)$, where $\varepsilon(n,d)$
is  $(-1)^{\frac{d-1}{2}}$
if $d$ is odd
and
is $(-1)^{\frac d2
\cdot \frac{n+2}2}$
if $d$ is even.}

\smallskip

\noindent [In other words, the kernel of $\det V : \Gamma_k \to \{\pm1\}$ is the
subgroup of $\Gamma_k$ corresponding to the field extension
$k(\sqrt{\varepsilon(n,d)\cdot{\rm disc}_d(f)}\ )/k.]$

\smallskip

The proof\footnote{For $n=2$, this had already been proved by the late Torsten Ekedahl,
in an unpublished 2-page manuscript dated February 1993. He was then answering a question raised by J-P. Serre in a Coll\` ege de France course on Galois representations and motives. Ekedahl's method was similar, but not identical, to the one in this article. 
The author was happy to be able to discuss
these questions with him in november 2011, a few days before his unexpected
death.} is in two parts. One shows (cf.\ \S\ref{sde}) by a standard argument on universal families that the theorem is true up to a sign depending only on  $d$  and $n$. One then concludes
that this sign 
is equal to $\varepsilon(n,d)$,
using a topological computation given in
\S\ref{scc}.

\smallskip

The author would like to express
his sincere gratitude to Jean-Pierre
Serre for asking the author to compute the determinant
in terms of the discriminant of
a defining polynomial.
He kindly allows the author
to include his topological argument
in Section \ref{scc} and
Examples on cubic curves
and cubic surfaces in Section \ref{sEx}.
The author thanks Serre
and an anonymous referee
for numerous and helpful comments
to improve the presentation of
the article.
The author also thanks Michel Demazure
for showing him a manuscript
of \cite{De} before publication.
The research is partially
supported by
JSPS Grant-in-Aid (A) 22244001.

\section{ Determinant
of the complex conjugation}
\label{scc}

We compute the
determinant of
the action of the complex
conjugation on
the cohomology of
a proper smooth variety
of even dimension
over ${\mathbf R}$.

\subsection{}\label{ssKG}
We put $\Gamma_{\mathbf R}={\rm Gal}
({\C}/
{\R})
=\{1,c\}$.
We identify the Grothendieck group
$K_0({\Q}[\Gamma_{\mathbf R}])$
of representations
of $\Gamma_{\R}$ over
${\Q}$
with the group
$\{(a,b)\in
{\Z}\times {\Z}
\mid
a\equiv b\pmod 2\}$
by the isomorphism
sending the class
$[\rho]$ of
a representation
$\rho$ to $({\rm Tr} \, \rho(1),
{\rm Tr} \, \rho(c))$.
For a representation $V$
of $\Gamma_{\R}$ and an integer $m$,
let $V(m)$
denote the twist of $V$ by
the $m$-th power of
the non-trivial
character 
$\Gamma_{\R}\to \{\pm1\}$.
The automorphism
of $K_0({\Q}[\Gamma_{\R}])$
sending the class $[V]$
to $[V(1)]$ is given by
$(a,b)\mapsto 
(a,-b)$.
The map
$\det\colon
K_0({\Q}[\Gamma_{\R}])
\to \{\pm1\}$
sending the class
$[\rho]$
to $\det \rho(c)$
is given by
$(a,b)
\mapsto
(-1)^{\frac{a-b}2}$.

\begin{pr}\label{prtop}
Let $X$ be a 
separated scheme
of finite type
of dimension $n$ over 
${\R}$
and let
$e_{\mathbf C}$
and
$e_{\R}$
denote the Euler-Poincar\'e
characteristics 
with compact support of
the topological spaces
$X({\mathbf C})$
and $X({\R})$
respectively.
Then, the alternating
sum
$$[H^*_c(X({\mathbf C}),
{\Q})]
=
\sum_{q=0}^{2n}
(-1)^q
[H^q_c(X({\mathbf C}),
{\Q})],$$
viewed as an element of 
$K_0({\Q}[\Gamma_{\bf R}]) \subset \mathbf{Z} \times \mathbf{Z}$,
is equal to
$(e_{\mathbf C},
e_{\R})$.
\end{pr}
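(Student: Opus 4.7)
\smallskip

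\noindent\textbf{Proof sketch.} The first coordinate of $[H^*_c(X(\C),\Q)]$ equals $\sum_q(-1)^q\dim_\Q H^q_c(X(\C),\Q)$, which is $e_{\C}$ by the very definition of the Euler--Poincar\'e characteristic with compact support. The work is therefore to identify the second coordinate, $\sum_q(-1)^q{\rm Tr}(c^*\mid H^q_c(X(\C),\Q))$, with $e_\R$. This is a Lefschetz-type fixed point identity for the involution $c$ on $X(\C)$, whose fixed locus is $X(\R)$.

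The plan is to realize both sides as traces on a common finite-dimensional cochain complex. First, I would choose a compactification $X\hookrightarrow\bar X$ over $\R$ (by Nagata, or by taking the scheme-theoretic closure in some $\P^N_\R$), so that $\bar X(\C)$ is compact and $c$-stable, containing $X(\C)$ as the open complement of the closed $c$-stable subspace $Z(\C)$, where $Z=\bar X\setminus X$. Then $H^*_c(X(\C),\Q)$ is computed, as a $\Gamma_\R$-module, by the relative cochain complex $C^*(\bar X(\C),Z(\C);\Q)$ of any CW-structure on $\bar X(\C)$ for which $Z(\C)$ is a subcomplex.

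Using the triangulability of semialgebraic sets, I would choose a finite simplicial triangulation of $\bar X(\C)$ equivariant under $c$ and compatible with each of $Z(\C)$, $\bar X(\R)$ and $Z(\R)$; then pass to one barycentric subdivision. A standard check (a simplex of the subdivision is a flag $\sigma_0\subsetneq\cdots\subsetneq\sigma_k$, and if $c$ fixes it setwise then dimension considerations force $c$ to fix each barycenter $b_{\sigma_i}$) shows that in the subdivided triangulation every simplex fixed setwise by $c$ is fixed pointwise, and such simplices are exactly the ones contained in $\bar X(\R)$. Hence the trace of $c^*$ on $C^q(\bar X(\C),Z(\C);\Q)$ is the number of $q$-simplices of $X(\R)=\bar X(\R)\setminus Z(\R)$, and its alternating sum over $q$ is $e_\R$. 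The Hopf trace formula, applied to the bounded finite-dimensional complex $C^*(\bar X(\C),Z(\C);\Q)$ with the endomorphism induced by $c$, equates this alternating sum with $\sum_q(-1)^q{\rm Tr}(c^*\mid H^q_c(X(\C),\Q))$, as desired.

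The main obstacle I anticipate is arranging the equivariant, subspace-compatible triangulation together with the sign bookkeeping after subdivision; once this is in place the rest is mechanical. An alternative route would bypass triangulation altogether: both $[H^*_c(-)]$ in $K_0(\Q[\Gamma_\R])$ and the pair $(e_\C,e_\R)$ are additive along any $\Gamma_\R$-equivariant closed--open decomposition $X=U\sqcup Z$ (via the long exact sequence of compactly supported cohomology), so noetherian induction reduces to a base case such as $X=\mathbf{A}^n_\R$, where $H^{2n}_c(\C^n,\Q)=\Q(-n)$ immediately gives the class $(1,(-1)^n)=(e_\C,e_\R)$.
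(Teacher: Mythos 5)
Your main argument is correct and takes a genuinely different (though cousinly) route from the paper. The paper does not compactify or invoke the Hopf trace formula: it splits $X(\C)=X(\R)\sqcup(X(\C)\!\setminus\!X(\R))$, uses additivity of ${\rm Tr}(c)$ on compactly supported cohomology along this decomposition, observes that the trace on the fixed part is $e_{\R}$ because $c$ acts as the identity there, and then proves the trace on the free part vanishes by triangulating the \emph{quotient} $X(\C)/\Gamma_{\R}$ compatibly with the image of $X(\R)$, each open simplex off that image having two preimage copies swapped by $c$. Your version instead works upstairs: compactify, take a $c$-equivariant triangulation refining $Z(\C)$, $\bar X(\R)$, $Z(\R)$, barycentrically subdivide so that setwise-fixed simplices are pointwise-fixed, and apply Hopf's trace formula to the relative cochain complex. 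That buys a single clean trace computation with no long exact sequence; what it costs is that the entire difficulty is concentrated in the existence of the finite $c$-equivariant semialgebraic triangulation, which you assert but do not justify. That existence is exactly where the paper does its real work: it is obtained by first showing the quotient is a compact semi-algebraic set (via the embedding of $\P^m(\C)$ into Hermitian matrices by $z\mapsto {}^t\bar z\cdot z/(\bar z\cdot{}^t z)$), triangulating the quotient by \cite[Th\'eor\`eme 9.2.1]{RAG}, and lifting; a general ``triangulability of semialgebraic sets'' statement does not by itself produce an equivariant triangulation. (Also, for a merely separated, non-quasi-projective $X$ you need Nagata rather than a closure in $\P^N_{\R}$, and you need the compactification's complex points to be semialgebraic, which is why the paper first d\'evisses to the affine and then to the projective case.)

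Your proposed ``alternative route'' at the end does not work as stated: additivity plus noetherian induction cannot reduce a general $\R$-variety to copies of $\mathbf{A}^n_{\R}$, because an $\R$-scheme need not admit a stratification over $\R$ by affine spaces (e.g.\ the conic $x^2+y^2+1=0$, or any variety with no real points and nontrivial cohomology). The induction would only reduce you to proving the statement on \emph{some} dense open of an arbitrary integral $\R$-variety, which is not visibly easier than the original claim, so a genuinely geometric input (such as the triangulation argument) cannot be bypassed this way.
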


\begin{proof}
It suffices to
show that $e_{\R}
={\rm Tr}(c:
H^*_c(X({\mathbf C}),
{\Q}))$.
By the long exact sequence
$$ \cdots \to 
H^*_c(X({\mathbf C}) \! \sm \! X({\R}),
{\Q})
\to 
H^*_c(X({\mathbf C}),
{\Q})
\to 
H^*_c(X({\R}),
{\Q})
\to \cdots  \,,$$
we have
$${\rm Tr}(c \! : \! H^*_c(X({\mathbf C}),\! \Q)) \! = \!
{\rm Tr}(c  \! : \!  H^*_c(X({\R}),\! \Q))
+
{\rm Tr}(c \!: \!H^*_c(X({\mathbf C}) 
\! \sm \!
X({\R}),\! \Q)).$$
Since $X({\R})$
consists of
the $c$-fixed points,
we have
$${\rm Tr}(c:
H^*_c(X({\R}),
{\Q}))
=
e_{\R}.$$

We now show that
${\rm Tr}(c:H^*_c(X({\mathbf C}) \! \sm \! 
X({\R}),
{\Q}))=0$.
By a standard d\'evissage,
we can reduce the question to the case where
$X$ is affine, and then to the case where it is projective.
Hence, we may assume that
$X$ is a closed subscheme
of a projective space
${\P}^m_{\R}$
over ${\R}$.
The space
${\P}^m({\mathbf C})$
can be embedded as
a compact algebraic subset
(\cite[Definition 2.1.1]{RAG}) in
the space of
$m+1$ by $m+1$ Hermitian matrices
by the immersion sending the vector
$z=(z_0,\ldots,z_m)$
to the matrix
$\dfrac{^t\bar z\cdot z}{ \bar z\cdot ^t\! z}$.

\smallskip 

By \cite[Proposition 2.2.7]{RAG}, this implies that the quotient
$X({\mathbf C})
/\Gamma_{\R}$
is homeomorphic
to a compact semi-algebraic subset
(\cite[Definition 2.1.3]{RAG}) of
${\R}^{(m+1)^2}$.
Hence
by \cite[Th\'eor\`eme 9.2.1]{RAG},
there exists a triangulation
of $X({\mathbf C})
/\Gamma_{\R}$
such that
the image of
$X({\R})$
is a union of simplices.
For each simplex $\Delta$
in the triangulation,
the inverse image of
$\Delta \! \sm \! (\Delta
\cap {\rm Image}(X({\mathbf R})))$
in $X({\mathbf C})$ consists of
its 2 copies switched by $c$.
Hence the equality
${\rm Tr}(c:H^*_c(X({\mathbf C}) \! \sm \! 
X({\R}),
{\Q}))=0$
follows.
\end{proof}
\smallskip

\begin{cor}\label{cortop}
Assume that $X$ 
is proper and smooth
of even dimension $n$.

$1$. If we put $N=\frac 12 (e_{\mathbf C} -(-1)^{\frac n2}e_{\R}),$
we have $$\det (c\colon H^n(X({\mathbf C}),
{\Q}(\textstyle{\frac n2})))=(-1)^N.$$ 

$2$. 
Let $X_d$ be
the Fermat hypersurface
in ${\P}^{n+1}_{\R}$
of degree $d$ defined by the equation
$$T_0^d+\cdots+
T_{n+1}^d = 0 .$$
Then $:$
\begin{equation}
\det (c\colon
H^n(X_d({\mathbf C}),
{\Q}({\textstyle{ \frac n2}})))
=
\begin{cases}
(-1)^{\frac{d-1}2} 
&\text{ if $d$ is odd,}\\
(-1)^{\frac d2\frac {n+2}2}
&
\text{ if $d$ is even.}
\end{cases}
\label{eqVdc}
\end{equation}
\end{cor}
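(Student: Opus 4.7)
The plan is to deduce Part 1 from Proposition \ref{prtop} via Poincar\'e duality, and then to deduce Part 2 from Part 1 by a direct computation of $e_\C$ and $e_\R$ for the Fermat hypersurface.

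For Part 1, apply Proposition \ref{prtop} after twisting by $\Q(n/2)$: using the rule $(a,b)\mapsto(a,(-1)^{n/2}b)$ for the twist, established in \S\ref{ssKG}, we obtain
$$\sum_q(-1)^q[H^q(X(\C),\Q(\tfrac n2))] = (e_\C,(-1)^{n/2}e_\R)$$
in $K_0(\Q[\Gamma_\R])$, whose determinant is exactly $(-1)^N$. It suffices to show that only the middle cohomology contributes to this determinant. Since $n$ is even, complex conjugation acts trivially on the fundamental class, so $H^{2n}(X(\C),\Q)=\Q(-n)$ as a $\Gamma_\R$-representation, and cup product yields a perfect duality $H^q(n/2)\otimes H^{2n-q}(n/2)\to\Q$ for every $q$. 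Hence $H^q(n/2)$ and $H^{2n-q}(n/2)$ are $\Gamma_\R$-dual, and the product of their determinants (valued in $\{\pm1\}$) equals $1$. Since $(-1)^q=(-1)^{2n-q}$, these pairs appear in the alternating sum with the same sign, so their combined determinant is trivial, leaving only the middle factor $[H^n(n/2)]$.

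For Part 2, the classical formula for a smooth degree-$d$ hypersurface in $\P^{n+1}$ gives $e_\C=\frac{(1-d)^{n+2}-1}{d}+(n+2)$. For the real points: if $d$ is even, the equation $\sum T_i^d=0$ has no nonzero real solution, so $X_d(\R)=\emptyset$ and $e_\R=0$; if $d$ is odd, the map $[T_0:\cdots:T_n]\mapsto[T_0:\cdots:T_n:-(\sum T_i^d)^{1/d}]$ is a homeomorphism $\P^n(\R)\to X_d(\R)$ (the last coordinate is uniquely determined by the others, since $t\mapsto t^d$ is a homeomorphism of $\R$ when $d$ is odd), so $e_\R=\chi(\P^n(\R))=1$ as $n$ is even. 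Substituting into the formula of Part 1 reduces \eqref{eqVdc} to an elementary congruence modulo $4$.

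This last verification is the main remaining obstacle. For $d$ odd and $n\ge2$ even, the term $(1-d)^{n+2}$ is divisible by $2^{n+2}\ge16$, giving $e_\C\equiv(n+2)-d^{-1}\pmod{16}$; combined with $d+d^{-1}\equiv 2\pmod 4$ and the identity $n+1\equiv(-1)^{n/2}\pmod 4$ (one case for each residue of $n$ mod $4$), this yields $e_\C-(-1)^{n/2}\equiv d-1\pmod 4$ as required. For $d$ even, expanding $e_\C=\binom{n+2}{2}d-\binom{n+2}{3}d^2+\cdots$ shows that all terms beyond the first are divisible by $4$, so $e_\C/2\equiv\binom{n+2}{2}(d/2)\pmod 2$; the congruence $\binom{n+2}{2}\equiv(n+2)/2\pmod 2$, which reduces to $4\mid n(n+2)$ and holds for even $n$, finishes the argument. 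The case $n=0$ is handled directly from $e_\C=d$.
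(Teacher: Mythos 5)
Your proposal is correct and follows essentially the same route as the paper: Part 1 is deduced from Proposition \ref{prtop} together with the twist and determinant formulas of \S\ref{ssKG} and the Poincar\'e-duality cancellation of the non-middle cohomology (which the paper invokes in one line and you rightly spell out), and Part 2 comes from $e_{\R}=1$ or $0$ according to the parity of $d$, the standard Chern-class value of $e_{\mathbf C}$, and a congruence modulo $4$. The only differences are cosmetic: you parametrize $X_d(\R)$ by projection onto the first $n+1$ coordinates instead of the $d$-th power map to the hyperplane $X_1$, and you do the mod-$4$ arithmetic by direct expansion rather than via the paper's $\Phi(d)\equiv\Phi(a)+(d-a)\Phi'(a)\pmod 4$; both verifications check out.
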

\smallskip

\begin{proof}
1. Let $m=n/2$. Recall that ${\Q}(m)$ is the $m$-th twist of $\Q$, as defined in 1.1; this means that the complex conjugation acts  on it
by $(-1)^m$.

We have
$\det (c\colon
H^n(X({\mathbf C}),
{\Q}(m)))
=
\det(c\colon
[H^*(X({\mathbf C}),
{\Q}(m))])$
by  Poincar\'e duality.
By Proposition \ref{prtop},
the class 
$[H^*(X({\mathbf C}),
{\Q}(m))]$
in $K_0({\Q}[\Gamma_{\R}])$ is
$(e_{\mathbf C},
(-1)^{m}
e_{\R})$
and we obtain
$\det(c\colon
[H^*(X({\mathbf C}),
{\Q}(m))])
=(-1)^N$.

2.
If $d$ is odd,
the morphism $
(t_0,\ldots,t_{n+1})
\mapsto
(t_0^d,\ldots,t_{n+1}^d)$
of $X_d$ into $X_1$
induces a homeomorphism
$X_d({\R})\to 
X_1({\R})$;
since
$X_1$ is isomorphic to
${\P}^n$,
we have 
$e_{\R}=1$.
If $d$ is even,
then
$X_d({\R}) = \varnothing$
 and
we have 
$e_{\R}=0$. 

Let us now define a polynomial
$\Phi(T)\in {\Z}[T]$ by 
$$\Phi(T)=
\dfrac1T\cdot
\bigl((1-T)^{n+2} -
(1-(n+2)T)\bigr).$$
A standard computation
using Chern classes
shows that
$e_{\mathbf C}=\Phi(d)$.

\medskip

\n{\footnotesize [For the convenience of the reader,
we recall this computation.
The exact sequences

\quad $0\to \Omega^1_{{\P}^{n+1}}
\to {\mathcal O}_{{\P}^{n+1}}
(-1)^{\oplus n+2}
\to {\mathcal O}_{{\P}^{n+1}}
\to 0$

\n and 

\quad $0\to
{\mathcal O}_{X_d}(-d)
\to
\Omega^1_{{\P}^{n+1}}
\otimes
{\mathcal O}_{X_d}
\to
\Omega^1_{X_d}\to 0$

\smallskip

\n imply that
the total Chern class
$c(\Omega^1_{X_d})$ is equal to

\smallskip 

\quad $c({\mathcal O}(-1))^{n+2}
\cdot c({\mathcal O}(-d))^{-1}\cdot [X_d]
=(1-h)^{n+2}\cdot (1-dh)^{-1}\cdot dh$,

\n where $h=c_1({\mathcal O}(1))$ denotes the
class of a hyperplane.
The coefficient of $h^{n+1}$ in
$(-1)^nc(\Omega^1_{X_d})$
is
$(-1)^n\sum_{i=0}^n
\binom{n+2}i (-1)^i
d^{n-i+1}
=
\sum_{j=2}^{n+2}
\binom{n+2}j (-1)^j
d^{j-1}$
and is equal to
$\frac1d\cdot
\bigl((1-d)^{n+2} -
(1-(n+2)d)\bigr)=\Phi(d)$.]
}

\medskip

\n We have
$$\Phi(d)\equiv \Phi(a)
+(d-a)\Phi'(a) \!\! \pmod 4 \ \
\text{ if } \ d\equiv a \!\!  \pmod 2.$$
Since $\Phi(1)=n+1$ and
$\Phi(1)+\Phi'(1)=n+2$,
we have 
$\Phi'(1)=1.$  Hence, if $d$ is odd,
we get $\Phi(d) \equiv n+1 + d - 1 
\equiv (-1)^{n/2}+d-1 \pmod 4$,
and 

\quad  $2N \equiv (-1)^{n/2}+d-1 - (-1)^{n/2} \equiv d-1 \pmod 4$.

\noindent If $d$ is even, since $\Phi(0) = 0$ and $\Phi'(0) =$ $ \binom{n+2}2$ $\equiv 1 + \frac{n}{2} \pmod 2$, we have 

 \quad $2N = \Phi(d) \equiv  d(1 + \frac{n}{2}) \pmod 4$. 
 
 \smallskip
 
\n Hence assertion 2 follows from assertion 1.  \end{proof}

\section{Discriminant}\label{sdi}

The literature contains several non-equivalent definitions of ``the''
discriminant of a homogeneous polynomial. For instance, the discriminant of  $x^2 + y^2$ is sometimes defined as $1$, sometimes as $4$ and sometimes as $-4$. In what follows we shall put indices to the symbol ``disc'' in order to clarify the conventions we use.

   We start with the most standard definition (see e.g. \cite [Chap. 13]{GKZ}),
   which is satisfactory over $\C$, but which is not so over an arbitrary field.
\subsection {\it The discriminant, defined as the resultant
of the partial derivatives.}\label{ssDde}
We fix integers $n\geqslant 0$ 
and $d >1$.
We consider the polynomial ring
${\Z}[T_0,\ldots,T_{n+1}]$
and the free ${\Z}$-module
$E=\bigoplus_{i=0}^{n+1}
\Z\cdot T_i$.
The $d$-th symmetric power
$S^d E$ 
is identified with the free ${\Z}$-module
of finite rank
consisting of
homogeneous polynomials
of degree $d$
in ${\Z}[T_0,\ldots,T_{n+1}]$.
If 
$I=(i_0,\ldots,i_{n+1})
\in {\mathbf N}^{n+2}$ is a
multi-index,
we put

\smallskip

 \quad $T^I=T_0^{i_0}\cdots
T_{n+1}^{i_{n+1}}
\in {\Z}
[T_0,\ldots,T_{n+1}]$ \ \
and  \ \ $|I|=i_0+
\cdots+i_{n+1}$.

\smallskip

\n
The monomials 
$T^I$ of degree $|I|=d$ 
form a basis of 
$S^dE$.
Let 
$(C_I)_{|I|=d}$ 
be the dual basis of $(S^dE)^\vee$
and define
the universal polynomial
$F=\sum_{|I|=d}C_IT^I$.

We consider the resultant\footnote{For the definition of the resultant of $m$
homogeneous polynomials in $m$ indeterminates, see e.g. \cite[Chap.13, \S1.A]{GKZ} and  \cite[n$^{\rm o}$ 4 D\'efinition 3]{De}.}
$${\rm res}(D_0F,\ldots,
D_{n+1}F)$$
of its partial derivatives
$D_0F,\ldots,D_{n+1}F$.
It is a homogeneous polynomial
of degree $m=
(n+2)(d-1)^{n+1}$
in $(C_I)_{|I|=d}$
with integral coefficients.
If we put
\begin{equation}
a(n,d)=\dfrac
{(d-1)^{n+2}-(-1)^{n+2}}d,
\label{eqadn}
\end{equation}
the greatest common divisor
of the coefficients is $d^{a(n,d)}$
by \cite[n$^{\rm o}$ 5 Lemme 11]{De} and 
\cite[Chap.\ 13.1.D Proposition 1.7]{GKZ}.

\begin{df}\label{dfDis}
We call  ${\rm res}(
D_0F,\ldots,
D_{n+1}F)$
the resultant-discriminant of $F$
and we denote it
by ${\rm disc}_r(F)$.
We call 
$${\rm disc}_d(F)
=
\dfrac1{d^{a(n,d)}}
{\rm disc}_r(F)$$
the divided discriminant
of ~$F$.
\end{df}

The divided discriminant
${\rm disc}_d(F)$
is known to be geometrically irreducible
cf.\ \cite[n$^{\rm o}$ 6 Proposition 14]{De} and 
\cite[Chap.\ 13.1.D]{GKZ}.

\smallskip
 By specialization, this gives a meaning to ${\rm disc}_r(f)$ and  ${\rm disc}_d(f)$ for every homogeneous polynomial  $f$ in $n+2$ variables over a commutative ring $R$. 
The divided discriminant satisfies
the following smoothness criterion
due to Demazure:
\begin{pr}[{\rm \cite[n$^{\rm o}$ 5 Proposition 12]{De}}]
\label{prDem}
Let $f$ be a homogeneous polynomial of degree $d$ in $n+2$ variables 
over a commutative ring $R$.
Then, the divided discriminant ${\rm disc}_d(f)$ is
  invertible in $R$ if and only if the corresponding hypersurface is a smooth
  divisor of the projective space $\mathbf {P}^{n+1}_R
  ={\rm Proj}\ R[T_0,\ldots,T_{n+1}]$
 over $R$.
\end{pr}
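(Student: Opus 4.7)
The strategy is to reduce to the case of an algebraically closed residue field, then split into two subcases according to whether the residue characteristic divides $d$.

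First I would reduce to the following statement: for every algebraically closed field $k$ and every homogeneous polynomial $f$ of degree $d$ in $n+2$ variables over $k$, one has ${\rm disc}_d(f)\ne 0$ in $k$ if and only if $V(f)\subset{\P}^{n+1}_k$ is smooth over $k$. Indeed ${\rm disc}_d(f)$ is invertible in $R$ iff its image is non-zero in every residue field of $R$, and smoothness of a finitely presented morphism is detected on geometric fibres; both sides of the equivalence are therefore tested on $\bar k$-points of $\Spec R$.

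Next, when $d$ is invertible in $k$, I would combine the defining property of the resultant with Euler's identity. The resultant of $n+2$ homogeneous forms in $n+2$ variables over a field is non-zero iff the forms have no common projective zero, so ${\rm disc}_r(f)\ne 0$ iff the partials $D_0f,\ldots,D_{n+1}f$ have no common zero in ${\P}^{n+1}_k$. By the Jacobian criterion, $V(f)$ is smooth iff the partials have no common zero on $V(f)$; and Euler's identity $d\cdot f=\sum_i T_iD_if$ shows, $d$ being a unit, that any common projective zero of the partials automatically lies on $V(f)$. The two conditions thus coincide, and since $d^{a(n,d)}$ is a unit, ${\rm disc}_r(f)$ and ${\rm disc}_d(f)$ vanish simultaneously, which gives the claim.

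The delicate case is $p={\rm char}(k)\mid d$, where $d^{a(n,d)}$ vanishes in $k$ (as $a(n,d)\geq 1$ for $d>1$) and ${\rm disc}_r(f)$ vanishes identically, carrying no information. I would argue universally over $A={\Z}[C_I]$. Let $\mathcal{Z}\subset{\P}^{n+1}_A\times_{\Spec{\Z}}\Spec A$ be the incidence scheme of pairs $(x,(c_I))$ such that $x$ is a singular point of the hypersurface with coefficients $(c_I)$, and let $\mathcal{D}\subset\Spec A$ be its scheme-theoretic image. A direct projection-and-dimension computation, combined with the fact recalled in the text that ${\rm disc}_d(F)$ is primitive and geometrically irreducible (following \cite{De} and \cite{GKZ}), identifies ${\rm disc}_d(F)$ with the reduced defining equation of $\mathcal{D}$ in $\Spec{\Z}[C_I]$. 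Specialising at $f$ then gives the equivalence in every residue characteristic.

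The main obstacle is precisely this last step: for $p\mid d$ one cannot detect singularities directly from the resultant or from Euler's identity, and one must instead pass through the universal discriminant hypersurface, using Demazure's identification of its primitive defining equation with ${\rm disc}_d(F)$. Everything else is standard commutative algebra and the classical theory of the resultant.
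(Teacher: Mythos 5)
The paper offers no proof of Proposition \ref{prDem}: it is quoted from Demazure \cite{De}. So there is no internal argument to compare against; what you have written is, in substance, the standard proof, and it does work. It is worth noting how it sits relative to the surrounding text: the set-theoretic identity you aim for in the case $p\mid d$ --- that the zero locus of ${\rm disc}_d(F)$ equals the image of the incidence scheme --- is exactly Lemma \ref{lmgeo}, which the paper \emph{deduces from} Proposition \ref{prDem}, and your ``projection-and-dimension computation'' is the content of Proposition \ref{prdual} ($\Delta$ is a projective space bundle over ${\P}^{n+1}_{\Z}$, hence integral and flat over $\Z$). Reversing these logical arrows is legitimate and non-circular, since the proof of Proposition \ref{prdual} uses only the Jacobian criterion and Euler's identity, not Proposition \ref{prDem}. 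The two easy parts are fine, with one caveat: ``smoothness of a finitely presented morphism is detected on geometric fibres'' is false without flatness; here flatness is not an issue only because invertibility of ${\rm disc}_d(f)$ forces $f$ to be nonzero in every residue field (so $V(f)$ is a relative Cartier divisor), while conversely ``smooth divisor'' already includes flatness. This should be said.

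In the case $p\mid d$ two points need repair or expansion. First, ${\rm disc}_d(F)$ is \emph{not} the reduced defining equation of $\mathcal{D}$ in all characteristics: for $n$ even it is congruent to a square modulo $2$ (Theorem \ref{thm4}), hence non-reduced modulo $2$. Only the set-theoretic equality $V({\rm disc}_d(F))=\mathcal{D}$ is true, and it is all you need. Second, the mechanism that transports the characteristic-zero identification to characteristic $p$ is the heart of the matter and is only gestured at. One must say: $V({\rm disc}_d(F))$ is the closure of its generic fibre because ${\rm disc}_d(F)$ is primitive (by Gauss, $({\rm disc}_d(F))\Q[C_I]\cap\Z[C_I]=({\rm disc}_d(F))$); $\mathcal{D}$ is the closure of its generic fibre because the incidence scheme is irreducible and dominates $\Spec \Z$; and the two generic fibres coincide by your own $d$-invertible case applied over $\Q$. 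Geometric irreducibility of ${\rm disc}_d(F)$ is not actually needed here --- primitivity is. Finally, your parenthetical ``$a(n,d)\geqslant 1$ for $d>1$'' is false for $d=2$ with $n$ even, where $a(n,2)=0$ and ${\rm disc}_r={\rm disc}_d$ does not vanish identically modulo $2$; the universal argument still covers this case, but the motivating sentence does not. With these points made explicit the proof is complete.
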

This smoothness criterion would not work with the resultant-discriminant when $d$ is not invertible in $R$.


\smallskip
  
    The transformation properties of 
 ${\rm disc}_d$ are the same as those of ${\rm disc}_r$, namely (cf.\ 
\cite[n$^{\rm o}$ 5 Proposition 11]{De} and 
\cite{GKZ}):

\smallskip

(2.2.3) \quad ${\rm disc}_d(\lambda f) = \lambda^{(n+2)(d-1)^{n+1}} {\rm disc}_d(f)$ 

\n and 

(2.2.4) \quad  ${\rm disc}_d(f_A) =  \det(A)^{d(d-1)^{n+1}} {\rm disc}_d(f)$,

\smallskip

\noindent where $\lambda$ is any element of $R$, $A$ is any $(n+2)\times(n+2)$-matrix with coefficients in $R$, and $f_A(x) = f(Ax)$. 
The formula about $f_A$, applied to a permutation, shows that ${\rm disc}_d(f)$ does not depend on the indexing of the coordinates. 

\subsection{\it The universal family and the discriminant}\label{ssuniv}
We define the universal family
of hypersurfaces.
We put
${\P}^{n+1}_{\Z}
={\P}(E)
={\rm Proj}\
{\Z}[T_0,\ldots,T_{n+1}]$.
Let 
$\P_{n,d} =
{\P}((S^dE)^\vee)$
be the projective space 
defined by the
dual $(S^dE)^\vee
= {\rm Hom}(S^dE,{\Z})$; it is
the {\it moduli space}
of hypersurfaces of degree
$d$ in ${\P}^{n+1}$; we shall usually write it  $\P$ instead of $\P_{n,d}$.
The universal hypersurface
$$X\subset 
{\P}^{n+1}_{\Z}
\times \P$$
is then defined by
the universal polynomial
$F=\sum_{|I|=d}C_IT^I$.

Let $\Delta$ be
the closed subscheme
of $X$
defined by the vanishing of the
partial derivatives
$D_0F,\ldots,$
$D_{n+1}F$
of the universal polynomial
$F$.
By the Jacobian criterion, 
$X\sm \Delta$
is the maximum open
subscheme of $X$ on which
the canonical morphism
$X\to \P$ is smooth.

For an integer $m\ge 0$,
we identify the ${\Z}$-module
$\Gamma(\P,{\mathcal O}(m))$
with the symmetric power
$S^m((S^dE)^\vee)$
consisting of homogeneous polynomials
in $(C_I)_{|I|=d}$ 
of degree $m$.
The closed subscheme
$D$ of $\P=\P_{n,d}$
defined by 
the divided discriminant
${\rm disc}_d(F)$
is reduced and is
an effective Cartier divisor
flat over ${\Z}$.
For $m=
(n+2)(d-1)^{n+1}$,
the divided discriminant
${\rm disc}_d(F)$
is a basis of the free
${\Z}$-module
$L=\Gamma(\P,
{\mathcal O}(m)(-D))$
of rank 1.

The following is a geometric interpretation
of the smoothness criterion Proposition \ref{prDem}.

\begin{lm}\label{lmgeo}
The underlying set of the reduced closed
subscheme $D$ of $\P$ equals
the image of $\Delta$ by the projection map
${\P}^{n+1}_{\Z} \times \P \ \to \P$.
\end{lm}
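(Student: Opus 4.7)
The plan is to show that both closed subsets of $\P$ coincide point-by-point with the locus of those $s\in\P$ whose corresponding hypersurface $X_s\subset {\P}^{n+1}_{\kappa(s)}$ fails to be smooth over the residue field $\kappa(s)$. Given a point $s\in\P$, I denote by $f_s\in S^dE\otimes\kappa(s)$ the specialization of the universal polynomial $F$ at $s$, so that $X_s=\{f_s=0\}$.

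First I would identify the image of $\Delta$. Since $X$ is projective over $\P$ (it is a closed subscheme of $\P^{n+1}_{\Z}\times\P$), the projection $\pi:X\to\P$ is proper, so $\pi(\Delta)$ is already a closed subset of $\P$, and $s\in\pi(\Delta)$ if and only if the fiber $\Delta_s=\Delta\cap X_s$ is nonempty. The text has already observed that $X\setminus\Delta$ is the maximal open subscheme on which $X\to\P$ is smooth; equivalently, $\Delta$ is the non-smooth locus of $\pi$. Because the smooth locus of a morphism of finite presentation intersects each fiber in precisely the locus where the fiber itself is smooth (the Jacobian criterion applied to the one equation $f_s$ defining $X_s$ in $\P^{n+1}_{\kappa(s)}$), the fiber $\Delta_s$ equals the singular locus of $X_s$ over $\kappa(s)$. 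Therefore $s\in\pi(\Delta)$ iff $X_s$ is not smooth over $\kappa(s)$.

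Next I would identify the support of $D$. By definition, $D$ is the zero-locus of ${\rm disc}_d(F)$ on $\P$, so $s\in|D|$ iff ${\rm disc}_d(f_s)$ vanishes in $\kappa(s)$. Applying Proposition \ref{prDem} with $R=\kappa(s)$ (a field, where ``invertible'' means ``nonzero''), this is equivalent to the hypersurface $X_s\subset\P^{n+1}_{\kappa(s)}$ failing to be a smooth divisor, i.e., failing to be smooth over $\kappa(s)$. Combining the two characterizations yields $|D|=\pi(\Delta)$.

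There is no real obstacle: the lemma is essentially a direct geometric translation of Demazure's criterion, after unraveling the definition of $\Delta$ as the non-smooth locus of $X\to\P$. The only point requiring a moment's care is that the vanishing locus of the partial derivatives $D_iF$ inside the fiber $X_s$ indeed coincides with the singular locus of $X_s$ even in characteristics dividing~$d$; but this is automatic here because $\Delta$ is defined as a subscheme of $X$ (not merely of $\P^{n+1}_\Z\times\P$), so its points are already constrained to lie on the hypersurface.
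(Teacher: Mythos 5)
Your proof is correct and is essentially the argument the paper intends: the lemma is stated there without proof, introduced as ``a geometric interpretation of the smoothness criterion Proposition \ref{prDem}'', and your fiberwise application of that proposition (together with the observation, already made in the text, that $\Delta$ cuts out the non-smooth locus of each fiber $X_s$ via the Jacobian criterion, with the Euler-relation subtlety in characteristics dividing $d$ handled because $\Delta\subset X$) is exactly that interpretation spelled out.
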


\subsection{\it The discriminant and the dual variety}\label{ssdual}
We recall the formalism of
the dual variety \cite[3.1]{SGA7}
and the relation with
the discriminant.

In this subsection,
we fix an algebraically closed field $k$
and let the suffix $_k$ denote
the base change to $k$
over $\Z$.
The indeterminates
$T_0,\ldots,T_{n+1}$
define a homogeneous coordinate
of the projective space
$\P_k^{n+1}=\P(E_k)$.
We put $r=\dim (S^dE_k)-1$
and ${\P}^{r}
={\P}(S^dE_k)$.
The projective space
$\P_k=
{\P}((S^dE_k)^\vee)$
is the dual
$\check{\P}^{r}$ of
${\P}^{r}$ parametrizing hyperplanes
in ${\P}^{r}$.
The monomials
$(T^I)_{|I|=d}$ form a
homogeneous coordinate
of ${\P}^{r}$
and 
the universal coefficients
$(C_I)_{|I|=d}$ form a
homogeneous coordinate
of $\check {\P}^{r}=\P_k$.

The $d$-th symmetric power
$S^dE_k$
is naturally identified
with the space of global sections
$\Gamma({\P}^{n+1}_k,
{\mathcal O}(d))$
of the very ample invertible sheaf
${\mathcal O}(d)$ on
${\P}^{n+1}_k$.
The closed immersion
$$v_d\colon
{\P}^{n+1}_k=
{\P}(E_k)\to
{\P}^{r}
={\P}(S^dE_k)$$
defined by
${\mathcal O}(d)$ on
${\P}^{n+1}_k$ is 
called the $d$-th Veronese embedding.
We consider 
${\P}^{n+1}_k$
as a closed subscheme
of ${\P}^{r}$
by $v_d$.

The conormal sheaf
${\mathcal N}$
of ${\P}^{n+1}_k$
in ${\P}^{r}$
is defined as ${\mathcal I}/{\mathcal I}^2$
where ${\mathcal I}
\subset {\mathcal O}_{{\P}^{r}}$
denotes the ideal sheaf
defining the closed subscheme
${\P}^{n+1}_k\subset 
{\P}^{r}$.
It is a locally free 
${\mathcal O}_{{\P}^{n+1}_k}$-module of rank
$r-(n+1)$.
Let
${\mathbb P}({\mathcal N})
={\rm Proj}\ S^\bullet {\mathcal N}^\vee$ 
denote the associated 
covariant projective space bundle over 
${\P}^{n+1}_k$
{\footnotesize 
[here,
to be consistent with \cite[3.1]{SGA7},
we use the font ${\mathbb P}$
to denote the {\it covariant} projective
space bundle associated
to a locally free sheaf,
parametrizing the lines] }.

We view the projective space bundle
${\mathbb P}({\mathcal N})$ 
over ${\P}^{n+1}_k$ as a closed
subscheme of
${\P}^{n+1}_k
\times\P_k$
as in \cite[(3.1.1)]{SGA7}.
We recall the definition of the embedding
\begin{equation}
{\mathbb P}({\mathcal N})\to
{\P}^{n+1}_k
\times\P_k.
\label{eqemb}
\end{equation}
The projective space 
$\P_k={\mathbf P}((S^dE_k)^\vee)$ is 
the covariant projective space
${\mathbb P}(S^dE_k)$  
associated to the vector space $S^dE_k$.
Since the twist by an invertible
sheaf does not change
the associated projective space bundle,
the product
${\P}^{n+1}_k
\times\P_k$
is identified with
the covariant projective space bundle
${\mathbb P}(S^dE_k\otimes 
v_d^*{\mathcal O}_
{{\P}^r}(-1))$
associated to 
a locally free ${\mathcal O}_
{{\P}^{n+1}_k}$-module
$S^dE_k\otimes 
v_d^*{\mathcal O}_
{{\P}^r}(-1)$.
The canonical map
${\mathcal N}\to \Omega^1_{\P^r}
\otimes {\mathcal O}_
{{\P}^{n+1}_k}$
defined by the closed immersion
${\P}^{n+1}_k
\to \P^r$
and the canonical map
$\Omega^1_{\P^r}
\to S^dE_k\otimes {\mathcal O}_
{{\P}^r}(-1)$
for the projective space
$\P^r={\P}(S^dE_k)$
are locally splitting injections.
Hence the composition
${\mathcal N}\to \Omega^1_{\P^r}
\otimes {\mathcal O}_
{{\P}^{n+1}_k}
\to 
S^dE_k\otimes 
v_d^*{\mathcal O}_
{{\P}^r}(-1)$ defines
the desired closed immersion
${\mathbb P}({\mathcal N})
\to {\P}^{n+1}_k
\times\P_k$
by the covariant functoriality.

The map
$\varphi\colon
{\mathbb P}({\mathcal N})\to
\check{\P}^r$
\cite[(3.1.2)]{SGA7}
is defined as the composition
\begin{equation}
{\mathbb P}({\mathcal N})
\to {\P}^{n+1}_k
\times
\P_k
\to \P_k
\label{eqphi}
\end{equation}
of the immersion (\ref{eqemb})
and the projection.
The dual variety
of ${\P}^{n+1}_k
\subset
\P^r_k$
is defined as the image of $\varphi$
\cite[(3.1.3)]{SGA7}.

\begin{lm}\label{lmDel}
Let $k$ be an algebraically closed field.

{\rm 1.}
We have an equality
${\mathbb P}({\mathcal N})
=\Delta_k$
of the underlying sets
of closed subschemes
of ${\P}^{n+1}_k\times
\P_k$. 

{\rm 2.}
The dual variety of
${\P}^{n+1}_k$
in ${\P}^{r}_k$
defined as the image of
the map
$\varphi\colon
{\mathbb P}({\mathcal N})
\to \P_k$ {\rm (\ref{eqphi})}
equals the underlying set of 
$D_k$.
\end{lm}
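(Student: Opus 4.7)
The plan is to translate both sides of Part 1 into set-theoretic conditions on pairs $(x,f)\in {\P}^{n+1}_k\times\P_k$; Part 2 will then follow by combining Part 1 with Lemma \ref{lmgeo} and Demazure's smoothness criterion (Proposition \ref{prDem}).

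For Part 1, I would first describe $\Delta_k$ set-theoretically. Since $\Delta\subset X$ is cut out in the universal hypersurface by $D_0F=\cdots=D_{n+1}F=0$, the geometric points of $\Delta_k$ are exactly the pairs $(x,f)$ with $f(x)=0$ and $(D_if)(x)=0$ for every $i=0,\ldots,n+1$. I would then unwind the definition of the embedding ${\mathbb P}({\mathcal N})\hookrightarrow {\P}^{n+1}_k\times\P_k$ in (\ref{eqemb}) to identify ${\mathbb P}({\mathcal N})$ with the classical incidence variety: a pair $(x,f)$, where $H_f\subset {\P}^r$ is the hyperplane associated to $f$, belongs to ${\mathbb P}({\mathcal N})$ if and only if $H_f$ contains the embedded tangent space to $v_d({\P}^{n+1}_k)$ at $v_d(x)$. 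The condition $v_d(x)\in H_f$ is exactly $f(x)=0$, while containment of the Zariski tangent space is exactly the vanishing of all directional derivatives $(D_if)(x)=0$. The two loci therefore coincide.

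For Part 2, the dual variety, defined as the image of $\varphi\colon{\mathbb P}({\mathcal N})\to\P_k$, equals by Part 1 the image of $\Delta_k$ under the projection ${\P}^{n+1}_k\times\P_k\to\P_k$. Applying Lemma \ref{lmgeo} after base change from ${\Z}$ to $k$, this image is the underlying set of $D_k$; equivalently, one can invoke Proposition \ref{prDem} directly: a polynomial $f$ lies in $D_k$ iff ${\rm disc}_d(f)=0$ iff the hypersurface $V(f)$ is singular iff $f$ is in the image of the projection.

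The main technical step is the translation in Part 1 between the sheaf-theoretic definition of ${\mathbb P}({\mathcal N})\hookrightarrow{\P}^{n+1}_k\times\P_k$, built from the composition ${\mathcal N}\to\Omega^1_{{\P}^r}\otimes{\mathcal O}_{{\P}^{n+1}_k}\to S^dE_k\otimes v_d^*{\mathcal O}_{{\P}^r}(-1)$, and the incidence-variety description in terms of tangent hyperplanes. This requires keeping track of the conventions identifying $\P_k=\P((S^dE_k)^\vee)$ with the space of hyperplanes in ${\P}^r=\P(S^dE_k)$, and verifying that, under the Veronese embedding together with the identification $S^dE_k=\Gamma({\P}^{n+1}_k,{\mathcal O}(d))$, the tangent space to $v_d({\P}^{n+1}_k)$ at $v_d(x)$ is cut out inside ${\P}^r$ exactly by the linear functionals $f\mapsto (D_if)(x)$.
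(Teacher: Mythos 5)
Your argument is correct and follows essentially the same route as the paper: for part 1 the paper likewise identifies ${\mathbb P}({\mathcal N})(k)$ with the incidence locus of pairs $(x,H)$ such that $H$ is tangent to the Veronese image at $x$ (quoting SGA 7, (3.1.1), where you propose to unwind the embedding (\ref{eqemb}) directly) and then matches this with $\Delta_k$ via the Jacobian criterion, which is just your explicit condition $f(x)=0$, $(D_if)(x)=0$ for all $i$. Part 2 is deduced exactly as you do, from part 1 together with the smoothness criterion of Proposition \ref{prDem} (equivalently Lemma \ref{lmgeo}).
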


\begin{proof}
1.
A $k$-rational point of
$\P(k)$ corresponds to
a hyperplane $H$ in $\P^r$
defined over $k$ since $\P_k$ is the dual of
$\P^r$.
The intersection of 
$H$ with $\P^{n+1}_k$
embedded in $\P^r$
by the $d$-th Veronese embedding
$v_d\colon \P^{n+1}_k
\to \P^r$
is a hypersurface of
degree $d$ and is the
geometric fiber
of the universal family $X\to \P$
at the geometric point
$[H]\in \P(k)$. 
Consequently,
the set of $k$-valued
points $X(k)\subset
{\P}^{n+1}(k)
\times
\P(k)$
consists of the pairs $(x,H)\in
{\P}^{n+1}(k)\times \P(k)$,
where $H$ is
a hyperplane of ${\P}^{r}$ 
containing $x\in
{\P}^{n+1}(k)\subset {\P}^{r}(k)$.

By  \cite[(3.1.1)]{SGA7},
the set of $k$-valued points 
${\mathbb P}({\mathcal N})(k)
\subset{\P}^{n+1}(k)\times \P(k)$
consists of the pairs $(x,H)\in
X(k)$
such that $H$ is tangent
to ${\P}^{n+1}_k$
at $x$.
In other words,
it consists of the points
of $X(k) \subset
{\P}^{n+1}(k)
\times
\P(k)$ where
the geometric fiber
$X_k\to \P_k$
of the canonical map
of the universal family
$X\to \P$ is not smooth.
Thus the assertion follows
by the Jacobian criterion.

2.
Since the dual variety
is defined as the
image $\varphi({\mathbb P}({\mathcal N}))$,
it follows from assertion 1
and the smoothness criterion
Proposition \ref{prDem}.
\end{proof}

\begin{pr}\label{prdual}
$\Delta$ is a projective space
bundle over ${\P}^{n+1}_{\Z}$.
\end{pr}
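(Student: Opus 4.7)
The plan is to exhibit $\Delta$ as the projectivization $\mathbb P(\mathcal K)$ of a locally free subsheaf $\mathcal K\subset S^dE\otimes_{\mathbf Z}\mathcal O_{\mathbf P^{n+1}_{\mathbf Z}}$. The heuristic is that a point of $\Delta$ is a pair $(x,[F_0])$ where $F_0\in S^dE$ has vanishing first-order jet at $x$ (equivalently, vanishes to order $\geqslant 2$ there), which is a linear condition on $F_0$ varying with $x$.

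Concretely, let $\mathcal P^1=\mathcal P^1_{\mathbf P^{n+1}_{\mathbf Z}/\mathbf Z}(\mathcal O(d))$ be the sheaf of first-order principal parts of $\mathcal O(d)$; it is locally free of rank $n+2$ on $\mathbf P^{n+1}_{\mathbf Z}$ and sits in the short exact sequence $0\to\Omega^1_{\mathbf P^{n+1}/\mathbf Z}\otimes\mathcal O(d)\to\mathcal P^1\to\mathcal O(d)\to 0$. The tautological jet-evaluation homomorphism
$$\mathrm{ev}\colon S^dE\otimes_{\mathbf Z}\mathcal O_{\mathbf P^{n+1}_{\mathbf Z}}\longrightarrow\mathcal P^1$$
is surjective, since already $\mathcal O(1)$ is $1$-jet generated over $\mathbf Z$ by $T_0,\dots,T_{n+1}$; one checks at $[1\!:\!0\!:\!\cdots\!:\!0]$ that the $1$-jets of $T_0^d,T_0^{d-1}T_1,\dots,T_0^{d-1}T_{n+1}$ span for any $d\geqslant 1$ in every characteristic. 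Let $\mathcal K=\ker(\mathrm{ev})$, a locally free subbundle of rank $r-n-1$. The inclusion $\mathcal K\hookrightarrow S^dE\otimes\mathcal O$ then induces a closed immersion of covariant projective bundles
$$\mathbb P(\mathcal K)\hookrightarrow\mathbb P(S^dE\otimes\mathcal O_{\mathbf P^{n+1}_{\mathbf Z}})=\mathbf P^{n+1}_{\mathbf Z}\times\mathbf P.$$

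It remains to identify $\mathbb P(\mathcal K)$ with $\Delta$ as subschemes of $\mathbf P^{n+1}_{\mathbf Z}\times\mathbf P$. By construction $\mathbb P(\mathcal K)$ is cut out by the composition $\mathcal O_{\mathbf P}(-1)\hookrightarrow S^dE\otimes\mathcal O\to\mathcal P^1$, viewed as a section of $\mathcal P^1\boxtimes\mathcal O_{\mathbf P}(1)$. Using the defining sequence of $\mathcal P^1$ together with the Euler sequence on $\mathbf P^{n+1}_{\mathbf Z}$, this section decomposes into the tautological $F$ (in $\mathcal O(d)\boxtimes\mathcal O_{\mathbf P}(1)$) and its partial derivatives $D_0F,\dots,D_{n+1}F$ (in $\mathcal O(d-1)\boxtimes\mathcal O_{\mathbf P}(1)$). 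Thus the vanishing ideal of $\mathbb P(\mathcal K)$ is generated by $F,D_0F,\dots,D_{n+1}F$ — exactly the defining ideal of $\Delta\subset X\subset\mathbf P^{n+1}_{\mathbf Z}\times\mathbf P$ — so $\Delta=\mathbb P(\mathcal K)$, proving the proposition.

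The main subtlety is that the identification has to be checked uniformly over $\mathrm{Spec}\,\mathbf Z$, not only over fields. In characteristics dividing $d$ the Euler relation $\sum_iT_iD_iF=dF$ degenerates, so $F=0$ is no longer a consequence of the equations $D_iF=0$; it is essential that $\Delta$ is defined as a subscheme of $X=\{F=0\}$, so that $F$ is included among its defining equations in every characteristic and the fiber dimension of $\mathbb P(\mathcal K)\to\mathbf P^{n+1}_{\mathbf Z}$ stays constant equal to $r-n-2$. As a consistency check, one verifies that $\mathcal K$ agrees, up to the twist by $\mathcal O(d)$, with the conormal sheaf $\mathcal N$ of the Veronese embedding of \S\ref{ssdual}, so that $\mathbb P(\mathcal K)=\mathbb P(\mathcal N)$, in agreement with Lemma \ref{lmDel}.
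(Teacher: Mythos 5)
Your proof is correct, and it takes a genuinely different route from the paper's. The paper argues in two steps: first, using the Euler relation (\ref{eqEu}) it presents $\Delta$ locally on each $D(T_i)$ as the zero locus of $n+2$ forms linear in the $C_I$; second, it verifies that these forms are linearly independent at every geometric point by invoking the set-theoretic identification $\Delta_k={\mathbb P}({\mathcal N})$ of Lemma \ref{lmDel}.1 and computing the codimension from ${\rm rank}\,{\mathcal N}$. You instead build the subbundle globally over $\Z$: you take the kernel ${\mathcal K}$ of the first-order jet evaluation $S^dE\otimes{\mathcal O}\to{\mathcal P}^1({\mathcal O}(d))$, whose surjectivity (valid in every characteristic since ${\mathcal O}(d)$ is $1$-jet spanned by monomials for $d\geqslant 1$) makes ${\mathcal K}$ locally free of the right rank at once, and then match ideals: on $D(T_i)$ the components of the jet section generate the same ideal as $F,D_0F,\ldots,D_{n+1}F$, the one non-obvious inclusion $D_iF\in(F,\{D_jF\}_{j\neq i})$ being exactly the non-degenerate direction of the Euler relation, so the identification holds even when the residue characteristic divides $d$ --- a point you rightly flag. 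Your consistency check ${\mathcal K}\cong{\mathcal N}\otimes{\mathcal O}(d)$ (hence ${\mathbb P}({\mathcal K})={\mathbb P}({\mathcal N})$) confirms the two constructions agree. What your route buys is self-containedness and uniformity over ${\rm Spec}\ \Z$: you avoid the detour through the dual-variety formalism of \S\ref{ssdual} and the fiber-by-fiber check over algebraically closed fields. What the paper's route buys is that the conormal-sheaf picture is needed anyway for Lemma \ref{lmSGA7} and the Picard--Lefschetz input, so setting it up once serves both purposes. The only presentational caution is that the jet section has $n+2$ local components while $F,D_0F,\ldots,D_{n+1}F$ are $n+3$ generators; you should state explicitly that the comparison of ideals is made chart by chart on the $D(T_i)$, as you implicitly do.
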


\begin{proof}
The closed subscheme $\Delta$
of ${\P}^{n+1}_{\Z}
\times \P$ is defined,
on the inverse image of
the open subscheme $D(T_i)$
of ${\P}^{n+1}_{\Z}$,
by the vanishing of the $n+2$ linear forms
$F,
D_0F,\ldots, D_{i-1}F,
D_{i+1}F,\ldots, D_{n+1}F$ in 
$(C_I)_{|I|=d}$
by the equality
\begin{equation}
d\cdot F
=T_0\cdot D_0F+
\cdots+T_{n+1}\cdot D_{n+1}F.
\label{eqEu}
\end{equation}
Hence, it suffices to show that
for every geometric point
of each $D(T_i)$,
these forms are linearly independent.
Or equivalently,
it suffices to show that
for every geometric point
${\rm Spec}\ k \to {\P}^{n+1}_{\Z}$,
the geometric fiber
of $\Delta_k \to
{\P}^{n+1}_k$
is a linear subspace of
$\P_k$
of codimension $n+2$.

Let $k$ be an algebraically closed field
and take the notation in \ref{ssdual}.
By Lemma \ref{lmDel}.1,
we have an equality
$\Delta_k=
{\mathbb P}({\mathcal N})$
of the underlying sets.
By the definition of
the closed immersion 
${\mathbb P}({\mathcal N})
\to 
{\P}^{n+1}_k
\times\P_k$ (\ref{eqemb})
recalled above,
${\mathbb P}({\mathcal N})$
is a linear subbundle of
the projective space bundle
${\P}^{n+1}_k
\times\P_k$
over
${\P}^{n+1}_k$.
Hence the fiber
of $\Delta_k=
{\mathbb P}({\mathcal N})
\to {\P}^{n+1}_k$
is a linear subspace of
$\P_k$ of codimension 
$\dim ({\P}^{n+1}_k\times
\P_k)
-\dim {\mathbb P}({\mathcal N})$.

Since
${\rm rank}\ {\mathcal N}=
\dim \P_k-
\dim {\P}^{n+1}_k$,
the codimension
$\dim ({\P}^{n+1}_k\times
\P_k)
-\dim {\mathbb P}({\mathcal N})$
is equal to
$(\dim \P_k
+\dim {\P}^{n+1}_k)
-(\dim \P_k-
\dim {\P}^{n+1}_k
-1+\dim {\P}^{n+1}_k)
=n+2$.
Thus, the assertion follows.
\end{proof}

\subsection {\it The irreducibility
of the discriminant modulo $p$}\label{ssD}

Let $\pi\colon X\to \P$
denote the canonical morphism
of the universal family of
hypersurfaces.
By  \cite[Corollaire 1.3.4]{PL} there exists an open subset
$W$ of $\Delta$
consisting of the points
$w$ such that
$w$ is an ordinary quadratic singularity
in the fiber $X_{\pi(w)}$.

\begin{lm}\label{lmSGA7}
{\rm 1.}
For every algebraically closed field $k$,
the geometric fiber
$W_k$
is dense in
${\mathbb P}({\mathcal N})=\Delta_k$.

{\rm 2.}
Assume that $n$ is odd or 
${\rm char}\ k\neq 2$.
Let  $W'_k$
be the subset of $W_k\subset 
{\mathbb P}({\mathcal N})$
consisting of the images of 
geometric points $w$
of $W_k$
that is a unique singular
point in the geometric fiber $X_{\pi(w)}$.

Then, $W'_k$
is the maximum open subscheme
of ${\mathbb P}({\mathcal N})$
where the restriction 
of $\varphi\colon
{\mathbb P}({\mathcal N})\to 
\P_k$
is an immersion.
Consequently,
the canonical morphism
${\mathbb P}({\mathcal N})
\to D_{k,{\rm red}}$ to
the maximum reduced subscheme of
$D_k$ is birational. 
\end{lm}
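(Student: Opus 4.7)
The plan is to prove Part 1 by a density argument and Part 2 by a local analysis of the differential and image of $\varphi$.

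For Part 1, I would first invoke Proposition \ref{prdual}: ${\mathbb P}({\mathcal N})=\Delta_k$ is a projective bundle over the irreducible $\P^{n+1}_k$, hence irreducible. Since $W_k$ is open in $\Delta_k$, density reduces to the non-emptiness of $W_k$, which follows by writing down explicitly a degree-$d$ hypersurface over $k$ with an ordinary quadratic singularity at some point, a standard construction.

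For Part 2, the key computation is $\ker(d\varphi_w)$ at $w=(x,[H])\in {\mathbb P}({\mathcal N})$. Since $\varphi$ factors as the closed immersion (\ref{eqemb}) followed by the second projection $\P^{n+1}_k\times \P_k\to \P_k$, this kernel equals the Zariski tangent space at $w$ to the fiber $\varphi^{-1}([H])$, which by the definition of $\Delta$ is the singular subscheme $V(D_0F,\ldots,D_{n+1}F)$ of $X_H$ at $x$. Under the hypothesis $n$ odd or $\mathrm{char}\,k\neq 2$, the partial derivatives of an ordinary quadratic singularity generate the maximal ideal at $x$, so this fiber is reduced of length one there; at isolated non-ODP singularities the length exceeds one, and at non-isolated singularities the fiber is positive-dimensional. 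Hence $\varphi$ is unramified at $w$ if and only if $w\in W_k$. On $W'_k$ the morphism $\varphi$ is then unramified and also injective on points by the defining condition of $W'_k$. A standard first-order deformation argument further shows that $D_{k,\mathrm{red}}$ is smooth at each point of $\varphi(W'_k)$; hence $\varphi|_{W'_k}$ is an injective unramified morphism between smooth varieties of the same dimension $r-1$, i.e.\ (by miracle flatness) étale and bijective onto its image, thus an open immersion onto an open of $D_{k,\mathrm{red}}$. This is in particular an immersion into $\P_k$, and gives the birationality of ${\mathbb P}({\mathcal N})\to D_{k,\mathrm{red}}$ at once.

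For maximality, let $U\subset {\mathbb P}({\mathcal N})$ be open with $\varphi|_U$ an immersion. Unramification at every point of $U$ forces $U\subset W_k$. Suppose for contradiction some $w\in U$ lies in $W_k\setminus W'_k$, and let $w'\neq w$ be another singular point of $X_{\pi(w)}$. Since $U$ is non-empty open in the irreducible ${\mathbb P}({\mathcal N})$, $\varphi(U)$ is dense in the irreducible $D_{k,\mathrm{red}}$; for $\varphi|_U$ to be an immersion into $\P_k$, $\varphi(U)$ must be locally closed, forcing it to be Zariski open in $D_{k,\mathrm{red}}$ near $[H]=\varphi(w)$. But deformations of $X_H$ that smooth the singularity at $w$ while preserving the singularity near $w'$ supply points of $D_{k,\mathrm{red}}$ in every Zariski neighborhood of $[H]$ whose preimages in ${\mathbb P}({\mathcal N})$ lie near $w'\notin U$, contradicting openness of $\varphi(U)$. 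The main obstacle in this plan is this last step: it relies on the global irreducibility of $D_k$ (from \cite{De} and \cite{GKZ}) together with the analytic independence of local branches of $D_k$ at a multiply-singular hypersurface.
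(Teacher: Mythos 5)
Your Part 1 and the ``forward'' half of Part 2 are essentially sound, and they amount to reproving from scratch what the paper simply cites from SGA 7 XVII: non-emptiness of $W_k$ plus irreducibility of ${\mathbb P}({\mathcal N})$ gives density (the paper quotes (3.7.1) for non-emptiness), and your identification of $\ker d\varphi_w$ with the tangent space of the singular subscheme of $X_{\pi(w)}$ at $w$ correctly locates where $\varphi$ is unramified (this is Proposition 3.3 of loc.\ cit., and is exactly where the hypothesis ``$n$ odd or ${\rm char}\,k\neq 2$'' enters). Two gaps remain. The smaller one: your route to ``open immersion on $W'_k$'' via miracle flatness needs $D_{k,{\rm red}}$ to be regular at points of $\varphi(W'_k)$, which you only assert (``a standard first-order deformation argument''), and you never check that $W'_k$ is open in ${\mathbb P}({\mathcal N})$. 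Both are avoided by exploiting properness of $\varphi$: the locus of $[H]$ over which the fiber is empty or a single reduced point is open in $\P_k$ (this is Lemma \ref{lmimm}), $W'_k$ is its preimage, and over it the proper, unramified, radicial map $\varphi$ is a closed immersion; smoothness of $D_{k,{\rm red}}$ there then falls out as a consequence rather than being an input.

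The serious gap is the one you flag yourself, in the maximality step. You must rule out an open $U$ containing some $w\in W_k\setminus W'_k$ with $\varphi|_U$ an immersion, and your contradiction rests on producing, in every Zariski neighborhood of $[H]=\varphi(w)$ inside $D_{k,{\rm red}}$, points $[H']$ ``whose preimages lie near $w'\notin U$'' and concluding $[H']\notin\varphi(U)$. But $[H']\notin\varphi(U)$ means that \emph{every} singular point of $X_{H'}$ avoids $U$, and ``near $w'$'' carries no such information in the Zariski topology: the set of such $[H']$ is precisely $D\setminus\varphi(U)$, so exhibiting them arbitrarily close to $[H]$ is literally the assertion that $\varphi(U)$ fails to be open at $[H]$ --- the thing to be proved. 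No independent construction is given, and the appeal to ``analytic independence of local branches'' is not something the Zariski topology sees. The step can be closed without any deformation theory: you have already shown that $\varphi|_U$ is an isomorphism onto an open subscheme $\Omega\subseteq D_{k,{\rm red}}$; its inverse is then a section of the separated morphism $\varphi^{-1}(\Omega)\to\Omega$, hence a closed immersion, so $U$ is closed as well as open in $\varphi^{-1}(\Omega)$, which is irreducible and therefore connected; hence $U=\varphi^{-1}(\Omega)\ni w'$, contradicting injectivity of $\varphi|_U$ since $\varphi(w')=\varphi(w)$. (The paper bypasses all of this by quoting Proposition 3.5 of SGA 7 XVII, whose proof is essentially this properness argument.)
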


\begin{proof}
1.
By \cite[(3.7.1)]{SGA7},
the open subscheme 
$W_k$ of
${\mathbb P}({\mathcal N})$
is non-empty.
Since
${\mathbb P}({\mathcal N})$
is irreducible,
it is dense.

2.
By assertion 1
and  \cite[Proposition 3.3]{SGA7},
the morphism
$\varphi\colon {\mathbb P}({\mathcal N})
\to
\P_k$ (\ref{eqphi}) is generically
unramified (``unramified'' as defined in
EGA IV (17.3.1) is the same as ``net'' in 
SGA 7 : the diagonal map is an open immersion).
Hence the assertion
follows from \cite[Proposition 3.5]{SGA7}.
\end{proof}

We deduce 
the irreducibility
of the reduction of the discriminant modulo $p$
from Proposition \ref{prdual}
using the following Lemma.

\begin{lm}\label{lmimm}
Let $f\colon X\to Y$
be a proper morphism of
noetherian schemes.
For a point $y$ of $Y$,
let $f_y\colon X_y\to y$ denote the
base change
by the canonical map
$y={\rm Spec}\ \kappa(y)\to Y$
and define a subset $V$
of $Y$ by
$$V=\{y\in Y\mid X_y=\emptyset
\text{ 
or $f_y\colon X_y\to y$ is an isomorphism}
\}.$$
Then
$V$ is the largest open subset
of $Y$ such that
$X\times_YV\to V$
is a closed immersion.
\end{lm}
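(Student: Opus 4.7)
My plan is to show that $V$ coincides with the largest open subset $V'\subset Y$ on which the restriction $X\times_Y V'\to V'$ is a closed immersion. The inclusion $V'\subset V$ is immediate: base-changing to any $y\in V'$ turns $X\times_Y V'\to V'$ into a closed immersion $X_y\to\mathrm{Spec}\,\kappa(y)$, and such a morphism must be empty or an isomorphism since its target is a single point with field residue.

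For the reverse inclusion, I take $y\in V$ and produce an open neighborhood on which $f$ is a closed immersion. If $X_y=\emptyset$, properness of $f$ makes $Y\setminus f(X)$ an open neighborhood of $y$ on which $f$ pulls back to the empty scheme, trivially a closed immersion. The substantive case is when $f_y$ is an isomorphism, so the unique point $x$ of $X_y$ has $\kappa(x)=\kappa(y)$ and $f$ is quasi-finite at $x$. I plan to combine this with properness to invoke EGA IV, 18.12.4: a proper morphism which is quasi-finite at a point is finite in an open neighborhood of the image point. After shrinking $Y$, I may assume $f$ is finite; then $\mathcal{F}=f_*\mathcal{O}_X$ is a coherent $\mathcal{O}_Y$-algebra, and it suffices to show that the cokernel $\mathcal{G}$ of the canonical map $\mathcal{O}_Y\to\mathcal{F}$ vanishes near $y$. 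For this, base change for finite morphisms identifies $\mathcal{F}_y\otimes_{\mathcal{O}_{Y,y}}\kappa(y)$ with $H^0(X_y,\mathcal{O}_{X_y})=\kappa(y)$, and the induced map $\kappa(y)\to\kappa(y)$ carries $1$ to $1$; so $\mathcal{G}\otimes_{\mathcal{O}_{Y,y}}\kappa(y)=0$. Nakayama then forces $\mathcal{G}_y=0$, and coherence propagates the vanishing of $\mathcal{G}$ to an open neighborhood of $y$, on which $\mathcal{O}_Y\twoheadrightarrow f_*\mathcal{O}_X$ exhibits $f$ as a closed immersion.

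The main obstacle in this plan is the appeal to EGA IV, 18.12.4, which lets me reduce to the case where $f$ is finite; once that reduction is in place, the Nakayama argument on the cokernel of $\mathcal{O}_Y\to f_*\mathcal{O}_X$ finishes the proof routinely, and the maximality statement follows at once from the first paragraph.
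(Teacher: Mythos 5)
Your argument is correct and follows essentially the same route as the paper: reduce to the case where $f$ is finite by combining properness with the finiteness of the fibre over the point in question (the paper cites EGA III, 4.4.1--4.4.2 for this; you cite EGA IV, 18.12.4), and then apply Nakayama to the cokernel of $\mathcal{O}_Y\to f_*\mathcal{O}_X$. One small caution on the citation: the statement ``proper and quasi-finite at a point implies finite over a neighborhood of the image'' is false if other points of the same fibre are not isolated, but in your situation $X_y$ is a single point, so the entire fibre is finite and the reduction is legitimate.
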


\begin{proof}
The subset 
$$U=\{x\in X\mid x
\text{ is isolated in }f^{-1}(f(x))\}$$
of $X$ is the largest open subset of $X$
such that the restriction
$f|_U\colon U\to Y$
is quasi-finite by \cite[Proposition (4.4.1)]{EGA3}.
The complement
$W=Y\sm f(X\sm U)$
is the largest open subset of $Y$
such that the base change
$W\times_YX\to W$
is finite by \cite[Proposition (4.4.2)]{EGA3}.
Since $V$ is a subset of $W$,
by replacing $Y$ by $W$,
we may assume $f$ is finite.

By Nakayama's lemma,
the subset $V$ of $Y$
is the complement of
the support of the coherent 
${\mathcal O}_Y$-module
${\rm Coker}({\mathcal O}_Y
\to f_*{\mathcal O}_X)$.
Hence $V$ is the largest
open subscheme where
the morphism
${\mathcal O}_Y
\to f_*{\mathcal O}_X$
is a surjection.
\end{proof}

\begin{pr}\label{prirr}
Unless $p=2$ and $n$ is even,
the divided discriminant
${\rm disc}_d(F)$ modulo $p$
is a geometrically irreducible  polynomial in the $C_I$.
\end{pr}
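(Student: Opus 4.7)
The plan is to establish the geometric irreducibility of ${\rm disc}_d(F)\bmod p$ in two stages. Since ${\rm disc}_d(F)\in\Z[C_I]$ has content one, its reduction modulo $p$ is nonzero and homogeneous of the same total degree $m:=(n+2)(d-1)^{n+1}$; it will suffice to show that (a) the reduced subscheme $D_{k,{\rm red}}$ of $D_k := D\otimes_\Z k$ is irreducible for $k:=\overline{\F}_p$, and (b) ${\rm disc}_d(F)\bmod p$ is square-free, i.e.\ $D_k=D_{k,{\rm red}}$ scheme-theoretically.

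For (a), Proposition \ref{prdual} gives that $\Delta$ is a projective space bundle over $\P^{n+1}_\Z$, hence $\Delta_k$ is smooth and irreducible; by Lemma \ref{lmDel}.2, the underlying set of $D_k$ equals $\varphi(\Delta_k)$, which is therefore irreducible. For (b) the hypothesis $p\ne 2$ or $n$ odd enters through Lemma \ref{lmSGA7}.2, which asserts that $\varphi:\Delta_k\to D_{k,{\rm red}}$ is birational. Applying Lemma \ref{lmimm} to the proper morphism $\varphi:\Delta_k\to\P_k$ yields a dense open $V\subset\P_k$, necessarily containing $\varphi(W'_k)$ and therefore the generic point of $D_{k,{\rm red}}$, on which $\Delta_k\times_{\P_k}V\to V$ is a closed immersion. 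Since $\Delta_k$ is reduced, the image of this closed immersion is a reduced closed subscheme of $V$ with underlying set $D_{k,{\rm red}}\cap V$, hence equals $D_{k,{\rm red}}\cap V$ scheme-theoretically. In particular, the scheme-theoretic image of $\varphi$ in $\P_k$ is $D_{k,{\rm red}}$, and birationality yields $\varphi_*[\Delta_k]=[D_{k,{\rm red}}]$ as codimension-one cycles on $\P_k$. Writing ${\rm disc}_d(F)\bmod p = c\cdot g^\mu$ with $g\in k[C_I]$ irreducible, $c\in\F_p^\times$, $\mu\ge 1$, we obtain $[D_k]=\mu\,[D_{k,{\rm red}}]$, so $\mu\cdot\deg\varphi_*[\Delta_k]=\deg[D_k]=m$.

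To conclude $\mu=1$ it remains to show $\deg\varphi_*[\Delta_k]=m$. This follows from the principle of conservation of number applied to the proper flat family $\Delta\to\Spec\Z$ (flatness being a consequence of the projective bundle structure of Proposition \ref{prdual}): the pushforward class $\varphi_*[\Delta]$ on $\P_\Z$ has constant degree across the fibers, and its characteristic-zero value is $\deg[D_\Q]=m$ by the analogous birationality in characteristic zero together with the reducedness of $D_\Q$ (part of the cited irreducibility of ${\rm disc}_d(F)$ over $\Q$). The main obstacle is precisely this step: distinguishing the possibly non-reduced Cartier divisor $D_k$ from the reduced scheme-theoretic image $D_{k,{\rm red}}$ of $\varphi$, and then comparing degrees via the birational map. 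The hypothesis $p\ne 2$ or $n$ odd is essential because it is exactly what guarantees the birationality of $\varphi$ in Lemma \ref{lmSGA7}.2; without it the degree comparison can break down.
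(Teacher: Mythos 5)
Your argument is correct, but the step that rules out multiplicities takes a genuinely different route from the paper's. Both proofs obtain irreducibility of the support of $D_k$ from Proposition \ref{prdual} and Lemma \ref{lmDel}, and both use Lemmas \ref{lmSGA7}.2 and \ref{lmimm} to see that $\varphi$ is generically a closed immersion; the divergence is in how one deduces that ${\rm disc}_d(F)\bmod p$ is squarefree. You stay over $k=\overline{\F}_p$ and count degrees, $\mu\cdot\deg\varphi_*[\Delta_k]=m$, which forces you to import one further ingredient: that $\deg\varphi_*[\Delta_s]$ is constant along the fibers of the flat family $\Delta\to{\rm Spec}\ \Z$, so that it can be evaluated in characteristic $0$. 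That step is true --- by the projection formula $\deg\varphi_*[\Delta_s]$ equals the top self-intersection of $c_1(\varphi^*{\mathcal O}_{\P}(1))$ on $\Delta_s$, which is constant in a proper flat family (constancy of the Snapper/Hilbert polynomial, or specialization of cycle classes) --- but it is the one input not furnished by the paper's lemmas, and ``conservation of number'' as written is an appeal rather than a proof; it needs a precise statement and reference. The paper avoids intersection theory entirely by applying Lemma \ref{lmimm} to $\Delta\to\P$ \emph{over} $\Z$ before specializing: on the resulting open $V\subset\P$, the reduced closed subscheme $\Delta\times_{\P}V$ of $V$ has the same support as $D\times_{\P}V$ by Lemma \ref{lmgeo}, and since $D$ is reduced over $\Z$ (\S\ref{ssuniv}) the two coincide; base-changing this identification to $\overline{\F}_p$ exhibits a dense open of $D_{\overline{\F}_p}$ isomorphic to an open of the reduced scheme $\Delta_{\overline{\F}_p}$, so $D_{\overline{\F}_p}$ is generically reduced and $\mu=1$ at once. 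Your route buys independence from the integrality of $D$ over $\Z$ (you only use irreducibility of ${\rm disc}_d(F)$ over $\Q$); the paper's route buys economy, needing no comparison of degrees between fibers.
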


\begin{proof}
Let $V$
be the largest open subset of $\P$
such that
$\Delta\times_{\P}V\to V$
is a closed immersion.
Since $\Delta$ is reduced by Proposition \ref{prdual},
the induced map $\Delta\times_{\P}V\to
D\times_{\P}V$ is
an isomorphism by Lemma \ref{lmgeo}.

Lemmas \ref{lmSGA7}.2 and \ref{lmimm} imply
that the inverse image of
$V_{{\F}_p}$ 
is dense in
$\Delta_{{\F}_p}$
if $n$ is odd or $p\neq 2$.
Since the geometric fiber
$\Delta_{\overline{{\F}}_p}$ is reduced 
and irreducible by Proposition \ref{prdual},
the open subscheme
$(D\times_{\P}V)_{\overline{{\F}}_p}$ of
$D_{\overline{{\F}}_p}$
is reduced, irreducible and dense.
Hence, the fiber
$D_{{\F}_p}$
is a divisor of
$\P_{{\F}_p}$
defined by a geometrically irreducible
polynomial.
\end{proof}

\smallskip
Proposition \ref{prirr}
is proved in \cite[n$^{\rm o}$ 5 Proposition 14]{De}
under the assumption $p\nmid d(d-1)$.
In the exceptional case
$p=2$ and $n$ even,
we show in Theorem \ref{thm4} that the {\it signed discriminant}
$\varepsilon(n,d){\rm disc}_d(F)$ is 
congruent to a square modulo $4$
where $\varepsilon(n,d)$ is defined as in
Theorem stated in the introduction.

\section{Determinant}\label{sde}

In this section,
we assume that $n$ is even.

\subsection{}\label{ssdet}
Let $S$ be a normal integral
scheme of finite type
over ${\Z}$ and
$f\colon X\to S$ be
a proper smooth morphism 
of relative dimension $n$.
For a prime number
$\ell$ invertible in
the function field of $S$,
the cup-product
defines a non-degenerate
symmetric bilinear form on the
smooth ${\Q}_\ell$-sheaf
$R^nf_*
{\Q}_\ell(\frac n2)$
on $S[\frac1\ell]$.
Hence the determinant
defines
a character
$\pi_1(S[\frac1\ell])^{ab}
\to\{\pm 1\}\subset 
{\Q}_\ell^\times$
of the fundamental group,
which we denote by  $[\det H^n_\ell(X)]$.

\begin{lm}\label{lmDe}
There exists a unique character
$[\det H^n(X)]\colon
\pi_1(S)^{ab}
\to\{\pm 1\}$
such that, for every prime number
$\ell$ invertible in
the function field of $S$,
the composition 
with the map
$\pi_1(S[\frac1\ell])^{ab}
\to \pi_1(S)^{ab}$
induced by the open immersion
$S[\frac1\ell]\to S$ gives
$[\det H^n_\ell(X)]$.
\end{lm}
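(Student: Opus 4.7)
Here is the plan.

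Uniqueness is immediate: since $S$ is normal integral and $S[\frac 1\ell]\subset S$ is a dense open subscheme, the natural map $\pi_1(S[\frac 1\ell])^{ab}\twoheadrightarrow \pi_1(S)^{ab}$ is surjective, so a character of $\pi_1(S)^{ab}$ is determined by any one of its pullbacks.

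For existence, let $K$ denote the function field of $S$ and $\Gamma_K$ its absolute Galois group, equipped with the canonical surjection onto each $\pi_1(S[\frac 1\ell])^{ab}$. Write $\tilde\chi_\ell\colon \Gamma_K\to\{\pm 1\}$ for the pullback of $\chi_\ell := [\det H^n_\ell(X)]$. The key step is to prove that $\tilde\chi_\ell=\tilde\chi_{\ell'}$ for every pair of distinct primes $\ell,\ell'$ invertible in $K$. Both characters factor through $\pi_1(S[\frac{1}{\ell\ell'}])^{ab}$, and since $S[\frac 1{\ell\ell'}]$ is of finite type over $\Z$ the Chebotarev density theorem ensures that the Frobenii $\mathrm{Frob}_s$ at closed points $s$ are dense in this group. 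It therefore suffices to check $\chi_\ell(\mathrm{Frob}_s)=\chi_{\ell'}(\mathrm{Frob}_s)$ for every such $s$. Both values equal $\det\bigl(\mathrm{Frob}_s\mid H^n(X_{\bar s},\Q_\ell(\tfrac n2))\bigr)$, and Deligne's work on the Weil conjectures asserts that the characteristic polynomial of $\mathrm{Frob}_s$ on $H^n(X_{\bar s},\Q_\ell)$ lies in $\Z[T]$ independently of $\ell\neq\mathrm{char}\,\kappa(s)$; the Tate twist contributes an equal, $\ell$-independent factor on both sides, giving the desired equality.

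Writing $\tilde\chi$ for the common value, one descends it to $\pi_1(S)^{ab}$ as follows. For each codimension-one point $p$ of $S$, pick any prime $\ell$ with $\ell\neq\mathrm{char}\,\kappa(p)$ invertible in $K$; then $p$ lies in $S[\frac 1\ell]$, which is normal, so the inertia subgroup $I_p\subset\Gamma_K$ is contained in the kernel of $\Gamma_K\to\pi_1(S[\frac 1\ell])^{ab}$. Hence $\tilde\chi(I_p)=\tilde\chi_\ell(I_p)=1$. Since $\tilde\chi$ is unramified at every codimension-one point of the normal scheme $S$, it factors through $\pi_1(S)^{ab}$, producing the required character $[\det H^n(X)]$.

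The principal obstacle is the $\ell$-independence input in the compatibility step, which relies on Deligne's theorem; the remaining steps are routine manipulations with étale fundamental groups of normal schemes of finite type over $\Z$.
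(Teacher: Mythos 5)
Your uniqueness argument and your main $\ell$-independence step are essentially the paper's: reduce via Chebotarev to closed points of $S[\frac1{\ell\ell'}]$, which have finite residue fields, and invoke Deligne's $\ell$-independence of the characteristic polynomial of Frobenius there. That part is fine.

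The genuine gap is in your final descent from $\Gamma_K$ to $\pi_1(S)^{ab}$. You argue: ``$\tilde\chi$ is unramified at every codimension-one point of the normal scheme $S$, hence it factors through $\pi_1(S)^{ab}$.'' That is Zariski--Nagata purity of the branch locus, and it requires $S$ to be \emph{regular}, not merely normal; the lemma only assumes $S$ normal integral of finite type over $\Z$. For normal non-regular schemes the implication is false: for the quadric cone $S=\Spec k[x,y,z]/(z^2-xy)$, the double cover $\Spec k[u,v]\to S$ (with $x=u^2$, $y=v^2$, $z=uv$) is \'etale away from the vertex, hence unramified at every codimension-one point, yet the corresponding quadratic character of $\Gamma_K$ does not factor through $\pi_1(S)$. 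So the step as written does not go through under the stated hypotheses.

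The repair is cheap and uses only what you have already proved: since every point of $S$ has at most one residue characteristic, the opens $S[\frac1\ell]$, for $\ell$ invertible in $K$, cover $S$ (two of them already suffice). Your common character factors through each $\pi_1(S[\frac1\ell])^{ab}$, so the associated $\Z/2\Z$-coverings of the $S[\frac1\ell]$ agree over the overlaps (both being the normalization in the same quadratic extension of $K$) and glue to a finite \'etale covering of $S$; equivalently, as in the paper, $\pi_1(S)^{ab}$ is the push-out of $\pi_1(S[\frac1\ell])^{ab}\leftarrow \pi_1(S[\frac1{\ell\ell'}])^{ab}\to \pi_1(S[\frac1{\ell'}])^{ab}$, so the compatible characters descend. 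No purity statement is needed, and the codimension-one/inertia discussion can be dropped entirely.
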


\begin{proof}
First, we show the
case where $S={\rm Spec}\ k$
for a finite field $k={\F}_q$.
Then, for a prime $\ell$
different form the characteristic of $k$,
Deligne's theorem on $\ell$-adic representations
tells us that
the polynomial 
$\det(1-Fr_qt:
H^n(X_{\bar \F_q},{\Q}_\ell))$ is
in $\Z[t]$ and is independent of $\ell$.
Hence, the multiplicity $m$ of the eigenvalue
of $-q^{n/2}$ of $Fr_q$
acting on 
$H^n(X_{\bar \F_q},{\Q}_{\ell})$
and the determinant
$\det(Fr_q:
H^n(X_{\bar \F_q},{\Q}_{\ell}(\frac n2)))
=(-1)^m$ are independent of $\ell$.
Since the geometric Frobenius
$Fr_q$ generates
a dense subgroup of
$\Gamma_k$,
the assertion follows in this case.

We prove the general case.
For two different primes $\ell$ and $\ell'$
and for every closed point $x$ of
$S[\frac1{\ell \ell'}]$,
the case proved above
gives us a commutative diagram
\begin{equation}\begin{array}{ccccc}
&&\hspace{-1mm}
\pi_1(S[\frac1\ell])^{ab}&&\\
&\hspace{-7mm}
  \nearrow&&
  \hspace{-1mm}
\searrow &\\\\
\Gamma_{\kappa(x)}
&&&&
\hspace{-3mm}
\{\pm 1\}\\\\
&\hspace{-7mm}
\searrow&&
\hspace{-1mm}
\nearrow &\\
&&\hspace{-1mm}
\pi_1(S[\frac1{\ell'}])^{ab}&&
\end{array}
\label{eqCb}
\end{equation}
where the left slant arrows
are induced by the closed immersion of $x$.

By the Chebotarev density theorem 
\cite[Theorem 7]{ZL},
\cite[Theorem 9.11]{STai}
for a normal scheme of finite type
over $\Z$,
the images $\Gamma_{\kappa(x)}
\to\pi_1(S[\frac1{\ell \ell'}])^{ab}$
where $x$ runs the closed points
of $S[\frac1{\ell \ell'}]$ generate
a dense subgroup.
Hence we obtain
a commutative diagram
\begin{equation}
\begin{array}{ccccc}
&&\hspace{-1mm}
\pi_1(S[\frac1\ell])^{ab}&&\\
&\hspace{-7mm}
  \nearrow&&
  \hspace{-1mm}
\searrow &\\\\
\pi_1(S[\frac1{\ell \ell'}])^{ab}
&&&&
\hspace{-3mm}
\{\pm 1\}.\\\\
&\hspace{-7mm}
\searrow&&
\hspace{-1mm}
\nearrow &\\
&&\hspace{-1mm}
\pi_1(S[\frac1{\ell'}])^{ab}&&
\end{array}
\label{eqCbS}
\end{equation}
Glueing of coverings shows
that $\pi_1(S)^{ab}$
is the push-out of the
left half of (\ref{eqCbS}).
Thus, the characters
$[\det H^n_\ell(X)]$
induce a unique character 
$\pi_1(S)^{ab}\to \{\pm1\}$, which is
independent of $\ell$.
\end{proof}

\begin{cor}\label{corCb}
Let $X$ be a proper smooth
scheme of even dimension $n$
over a field $k$.
Then, for a prime number $\ell$
invertible in $k$,
the character
$\det H^n(X_{\bar k},
{\Q}_\ell(\frac n2))$ of
$\Gamma_k$
is independent of $\ell$.
\end{cor}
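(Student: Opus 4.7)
The plan is to deduce the statement from Lemma \ref{lmDe} by spreading $X$ out over a normal integral base of finite type over $\Z$. The lemma will supply an $\ell$-independent character on the fundamental group of such a base, and pulling it back to $\Gamma_k$ will give the desired $\ell$-independent character on $\Gamma_k$.

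First I would choose a finitely generated $\Z$-subalgebra $R_0 \subset k$ over which $X$ descends to a proper smooth $X_{R_0} \to \Spec R_0$; this is standard spreading-out, using that $k$ is the filtered colimit of such $R_0$ and that properness and smoothness descend to some finite stage after possibly inverting finitely many elements. I would then replace $R_0$ by its integral closure in its fraction field, which is module-finite over $R_0$ since finitely generated $\Z$-algebras are excellent. Setting $S := \Spec R_0$, I obtain a normal integral scheme of finite type over $\Z$ whose function field is contained in $k$.

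Next, a geometric point $\bar\eta$ of $S$ induced by an embedding $k_s \hookrightarrow \bar k$ gives a canonical continuous map $\Gamma_k \to \pi_1(S,\bar\eta)^{ab}$, which factors through $\pi_1(S[\tfrac1\ell],\bar\eta)^{ab}$ for every prime $\ell$ invertible in $k$, since $\Spec k \to S$ then lands in $S[\tfrac1\ell]$. Applying Lemma \ref{lmDe} to $X_{R_0} \to S$ yields an $\ell$-independent character $[\det H^n(X_{R_0})]\colon \pi_1(S)^{ab} \to \{\pm 1\}$, whose pullback to $\Gamma_k$ is therefore independent of $\ell$. The remaining verification is that this pullback agrees, for each $\ell$ invertible in $k$, with $\det H^n(X_{\bar k}, \Q_\ell(\tfrac n2))$; by the defining property in Lemma \ref{lmDe}, this reduces to identifying the stalk at $\bar\eta$ of the smooth sheaf $R^n f_* \Q_\ell(\tfrac n2)$ on $S[\tfrac1\ell]$ with $H^n(X_{\bar k}, \Q_\ell(\tfrac n2))$ as $\Gamma_k$-representations, which is the smooth and proper base change theorem in \'etale cohomology.

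The hard part, insofar as there is one, is organizational rather than mathematical: arranging $S$ to be simultaneously normal, integral, of finite type over $\Z$, and with function field inside $k$, so that both Lemma \ref{lmDe} applies and the map $\Gamma_k \to \pi_1(S)^{ab}$ is well-defined. This rests on the finiteness of normalization via excellence of finitely generated $\Z$-algebras; beyond that invocation, the rest is a routine unwinding of functoriality and proper base change.
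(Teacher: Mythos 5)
Your proposal is correct and is essentially the paper's own argument: reduce to a finitely generated situation, spread $X$ out over a normal integral scheme $S$ of finite type over $\Z$ (the paper phrases this as ``we may assume $k$ is finitely generated over a prime field'' and takes $S$ with function field $k$), and pull back the $\ell$-independent character supplied by Lemma~\ref{lmDe}. Your extra remarks on finiteness of normalization and the base-change identification of the stalk are just the details the paper leaves implicit.
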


\begin{proof}
We may assume that $k$
is finitely generated over a prime field.
Hence, there exist
a normal integral scheme $S$
of finite type over ${\Z}$
such that $k$ is the function field of $S$
and a proper smooth morphism
$f\colon X_S\to S$
of relative dimension $n$
such that the generic fiber
is $X$.
Now the assertion follows form
Lemma  \ref{lmDe}.
\end{proof}

\subsection{}\label{ssmu2}
Let $d > 1$
be an integer
and $\pi\colon X
\subset {\P}^{n+1}
_{\Z}\times \P
\to \P
={\P}
((S^dE)^\vee)$
denote the universal
hypersurface of degree $d$
defined by
the universal polynomial $F$.
The complement $U=\P\! \sm \! D$
of the divisor $D$
defined by the divided discriminant
${\rm disc}_d(F)$
is the maximum open
subscheme of $\P$ over which
$\pi\colon X\to \P$
is smooth.

Since $n$ is 
even, the
degree $m=(n+2)(d-1)^{n+1}$
of the discriminant
${\rm disc}_d(F)$
is even.
We consider
the $\mu_2$-torsor
on the fppf-site of $U$
consisting of bases of 
${\mathcal O}(\frac m2)$
whose square is
equal to the basis
${\rm disc}_d(F)$
of an invertible ${\mathcal O}_U$-module
${\mathcal O}_U(m)$.
We denote by $[{\rm disc}_d(F)]$
the class of this torsor in 
$H^1_{\rm fl}(U,\mu_2)$.

By applying Lemma \ref{lmDe}
to the universal smooth hypersurface 
$$\pi_U\colon
X_U\to U,$$ 
we define
$[\det H^n(X)]\in
H^1(U,{\Z}/2{\Z})$.
Let now $k$ be a field
and let $f\in S^dE\otimes k$ be a homogeneous
polynomial of degree $d$
defining a smooth hypersurface $Y$
in ${\P}^{n+1}_k$.
Then,
the pull-back in $H^1(k,
{\Z}/2{\Z})
={\rm Hom}(\Gamma_k^{ab},
{\Z}/2{\Z})$ of
$[\det H^n(X)]$
by the $k$-valued point of $U$
corresponding to $f$
is given by the determinant
of the orthogonal representation
$H^n(Y_{\bar k},{\Q}_\ell
(\frac n2))$
for a prime number $\ell$
invertible in $k$.

\begin{thm}\label{thmsign}
Let $n\geqslant 0$ and $d > 1$
be integers; assume that  $n$  is even.
Define the sign 
$\varepsilon(n,d)=\pm1$
by \begin{equation}
\varepsilon(n,d)
=
\begin{cases}
(-1)^{\frac{d-1}2} 
&\text{ if $d$ is odd,}\\
(-1)^{\frac d2\frac {n+2}2}
&
\text{ if $d$ is even.}
\end{cases}
\label{eqsign}
\end{equation}
Then, we have
\begin{equation}
[\det H^n(X)]
=[\varepsilon(n,d)\cdot
{\rm disc}_d(F)]
\label{eqpm}
\end{equation}
in $H^1(U_{{\Z}[\frac12]},{\Z}/2{\Z})$.
\end{thm}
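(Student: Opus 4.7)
The strategy is in two parts. First, a Picard-Lefschetz argument shows that $[\det H^n(X)]$ and $[\text{disc}_d(F)]$ have the same local monodromy along the discriminant divisor $D\subset\P_{\Z[1/2]}$, so that their difference is pulled back from $\Spec\Z[1/2]$. Second, the pullback is identified as $[\varepsilon(n,d)]$ by specializing to the Fermat hypersurface over $\R$ and invoking Corollary \ref{cortop}.2.

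For the first part, both classes are \'etale $\mu_2$-torsors on $U_{\Z[1/2]}$, unramified outside $D$, so the task reduces to comparing their local monodromies at the generic point of $D$ (which is irreducible by Proposition \ref{prirr}). Since $D$ is reduced and cut out by the section $\text{disc}_d(F)$ of $\mathcal{O}(m)$, the class $[\text{disc}_d(F)]$ has local monodromy $-1$. For $[\det H^n(X)]$, Lemma \ref{lmSGA7}.1 tells us that the generic fiber of $X\to\P$ along $D$ acquires exactly one ordinary quadratic singularity, so the Picard-Lefschetz formula applies: the local monodromy on $H^n(X_{\bar\eta},\Q_\ell(\tfrac n2))$ is a reflection in a vanishing cycle $\delta$, and since $n$ is even $(\delta,\delta)=\pm 2\ne 0$ and the reflection has determinant $-1$ (it is $-1$ on $\delta$ and $+1$ on $\delta^\perp$). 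Hence $\chi:=[\det H^n(X)]-[\text{disc}_d(F)]$ is unramified on $U_{\Z[1/2]}$; by absolute purity it extends to a class on $\P_{\Z[1/2]}$, and because the geometric fibers of $\P_{\Z[1/2]}\to\Spec\Z[1/2]$ are simply connected, the homotopy exact sequence gives $H^1(\P_{\Z[1/2]},\Z/2)=H^1(\Spec\Z[1/2],\Z/2)=\Z[1/2]^*/(\Z[1/2]^*)^2$, a group of order $4$ generated by the classes of $-1$ and $2$. So $\chi$ is represented by a universal sign $a(n,d)$ in this group.

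For the second part, pull both sides of \eqref{eqpm} back along the $\R$-point of $U$ given by the smooth Fermat polynomial $f_d=T_0^d+\cdots+T_{n+1}^d$. The left-hand side specializes to $\det(c:H^n(X_d(\C),\Q(\tfrac n2)))=\varepsilon(n,d)$ by Corollary \ref{cortop}.2, while the right-hand side specializes to $\varepsilon(n,d)\cdot\text{sign}\,\text{disc}_d(f_d)$; an elementary computation, using that the partial derivatives $D_if_d=dT_i^{d-1}$ are monomials so that the resultant $\text{res}(dT_0^{d-1},\ldots,dT_{n+1}^{d-1})$ is a positive power of $d$, shows $\text{disc}_d(f_d)>0$. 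Therefore the image of $a(n,d)$ in $H^1(\Spec\R,\Z/2)$ is trivial, leaving $a(n,d)\in\{1,[2]\}$. A second specialization---say to a finite-field point $\Spec\F_p\to U$ at an odd prime $p$ at which $2$ is not a square---eliminates the residual $[2]$-ambiguity and pins down $a(n,d)=1$, proving \eqref{eqpm}.

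The principal obstacle is the first part: setting up Picard-Lefschetz at the generic point of $D$ and verifying carefully that the reflection has determinant $-1$, which depends crucially on the irreducibility of $D$ (Proposition \ref{prirr}) and on the ordinary double-point nature of the generic degeneration (Lemma \ref{lmSGA7}). The $\sqrt 2$-elimination at the end of the second part is an arithmetic subtlety of lesser depth, handled by a second well-chosen specialization.
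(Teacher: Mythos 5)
Your first part follows essentially the same route as the paper: compare the two classes at the geometric generic point $\bar\xi$ of the irreducible divisor $D$, where $[{\rm disc}_d(F)]$ is ramified because ${\rm disc}_d(F)$ is a prime element, and $[\det H^n(X)]$ is ramified by Picard--Lefschetz applied to the generic one-node degeneration; then conclude that the difference comes from $\Z[\tfrac12]^\times/(\Z[\tfrac12]^\times)^2=\{1,-1,2,-2\}$, and kill the sign by the real Fermat point and Corollary \ref{cortop}.2. (One point you pass over too quickly: you need the vanishing cycle $\delta$ to be nonzero, i.e.\ the local monodromy to be genuinely nontrivial. The paper gets this from the regularity of the total space $X$, which forces $I_{\bar\xi}$ to act on $\Q_\ell(\tfrac n2)$ through the nontrivial character \cite[Th\'eor\`eme 3.4 (iii)]{PL}, whence the boundary map in the Picard--Lefschetz sequence vanishes and $\det H^n(X_{\bar\eta},\Q_\ell)$ is nontrivial on $I_{\bar\xi}$. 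Asserting $(\delta,\delta)=\pm2\neq0$ without input from the geometry of $X$ is not enough.)

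The genuine gap is in your elimination of the residual class $[2]$. You propose ``a second specialization to a finite-field point $\Spec\F_p\to U$ at an odd prime $p$ at which $2$ is not a square,'' but this is not a proof step: to extract information from such a point you must independently compute both $\det(\mathrm{Frob}_p$ on $H^n(Y_{\bar\F_p},\Q_\ell(\tfrac n2)))$ and the quadratic residue class of ${\rm disc}_d(f)$ for some explicit smooth hypersurface $Y/\F_p$, for every pair $(n,d)$ --- and that computation is essentially the theorem itself at that point. No such computation is carried out, and none is easy for general $d$. The paper instead resolves the ambiguity purely locally at $2$: by Lemma \ref{lmDe} the character $[\det H^n(X)]$ is defined on $\pi_1(U)^{ab}$ for the scheme $U$ over $\Z$ (not merely over $\Z[\tfrac12]$), hence its restriction to the decomposition group at the generic point of the fiber $\P_{\F_2}$ --- where $2$ is a uniformizer --- is unramified; on the other hand $[\pm2\,{\rm disc}_d(F)]$ cuts out a totally ramified quadratic extension of that local field. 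This excludes the classes $[\pm2\,{\rm disc}_d(F)]$ with no computation. You should replace your second specialization by this integrality/unramifiedness argument (or supply an actual worked example over $\F_p$ for each $(n,d)$, which is far harder).
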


By a standard specialization
argument,
Theorem \ref{thmsign}
implies the Theorem
stated in the introduction.

\smallskip
\begin{proof}
The Kummer sequence defines
an exact sequence

\begin{equation}
0\to \Gamma(U_{\frac12},
{\mathcal O})^\times
/(\Gamma(U_{\frac12},
{\mathcal O})^\times)^2
\to
H^1(U_{\frac12},
\Z/2\Z)
\to
\Pic(U_{\frac12})[2]
\to 0,
\label{eqPic}
\end{equation}

\n where we have written $U_{\frac12}$ instead of $U_{\Z[\frac12]}$ for typographical reasons, and $\Pic(U_{\frac12})[2]$ denotes the subgroup of $\Pic(U_{\frac12})$ killed by $2$.

\smallskip

We now compute 
$\Gamma(U_{\frac12},
{\mathcal O})^\times$
and 
${\rm Pic}(U_{\frac12})$.
In the exact sequence

\begin{equation}
0\to \Gamma(\P_{{\Z}[\frac12]},
{\mathcal O})^\times \! \to
\Gamma(U_{\frac12},
{\mathcal O})^\times \!
\to {\Z}
\to \!
{\rm Pic}(\P_{{\Z}[\frac12]})
\to\!
{\rm Pic}(U_{\frac12})\to0,
\label{eqPicU}
\end{equation}
\n the Picard group
${\rm Pic}(\P_{{\Z}[\frac12]})$
is canonically identified with
${\Z}$ by
the generator $[{\mathcal O}(1)]$.
Then, the map
${\Z}
\to
{\rm Pic}(\P_{{\Z}[\frac12]})$
is identified with the multiplication
by $m
=(n+2)(d-1)^{n+1}$
since it sends 1 to
the class of ${\mathcal O}(m)$.
Thus, we have

\smallskip

$\Gamma(U_{\frac12},
{\mathcal O})^\times
=
\Gamma(\P_{{\Z}[\frac12]},
{\mathcal O})^\times
= {\Z}[\frac12]^\times
=\langle-1,2\rangle$.

\smallskip

\n It also follows from (\ref{eqPicU})
that
the Picard group
${\rm Pic}(U_{\frac12})$
is cyclic of order $m$; since $m$ is even, this shows that ${\rm Pic}(U_{\frac12})[2]$ has order 2. .

Let $\bar \xi$ be
a geometric generic point of
the irreducible divisor $D$
and let
$I_{\bar \xi}$ denote the
absolute Galois group
of the fraction field of
the strict henselization
${\mathcal O}_{\P,\bar \xi}$.
Since the profinite group
$I_{\bar \xi}$ is isomorphic
to $\widehat {\Z}$,
the group
${\rm Hom}(I_{\bar\xi},{\Z}/2{\Z})$
is of order 2.

We show that
the images of
$[\det H^n(X)]$ and
$[{\rm disc}_d(F)]$
by the restriction map
$H^1(U_{\frac12},
{\Z}/2{\Z})
\to
{\rm Hom}(I_{\bar\xi},{\Z}/2{\Z})$
are both the unique non-trivial element.
For $[{\rm disc}_d(F)]$,
this follows from
that ${\rm disc}_d(F)$
is a prime element of 
the irreducible divisor $D$.

Let
 $\bar \eta$ denote
a geometric generic point of
${\rm Spec}\ {\mathcal O}_{\P,\bar \xi}$.
We show that
the character $\det
H^n(X_{\bar \eta},{\Q}_\ell)$
of $I_{\bar \xi}$ is 
the unique non-trivial
character of order 2.
By the result of  \cite{SGA7} recalled 
in  Lemma \ref{lmSGA7}.2
applied to $k=\bar {\Q}$,
the geometric fiber $X_{\bar \xi}$
has a unique singular
point which is 
an ordinary quadratic singularity
in $X_{\bar \xi}$.
Hence, by the Picard-Lefschetz formula
\cite[Th\'eor\`eme 3.4 (ii)]{PL},
we have an exact sequence
\begin{align}
0\to
&H^n(X_{\bar \xi},{\Q}_\ell)
\to
H^n(X_{\bar \eta},{\Q}_\ell)
\to
{\Q}_\ell(\textstyle{\frac n2})\label{PL}
\\
&
\to
H^{n+1}(X_{\bar \xi},{\Q}_\ell)
\to
H^{n+1}(X_{\bar \eta},{\Q}_\ell)
\to 0
\nonumber
\end{align}
of $\ell$-adic representations
of the inertia group $I_{\bar \xi}$.
Further, since $X$ is regular,
the base change $X_{{\mathcal O}_{\P,\bar \xi}}$
is also regular and 
the inertia group $I_{\bar\xi}$
acts on 
${\Q}_\ell(\frac n2)$
via the unique non-trivial character
$I_{\bar\xi}\to \{\pm1\}$
by
\cite[Th\'eor\`eme 3.4 (iii)]{PL},
\cite{IPL}.
Since $I_{\bar \xi}$
acts trivially on
$H^{n+1}(X_{\bar \xi},{\Q}_\ell)$
and on
$H^n(X_{\bar \xi},{\Q}_\ell)$,
the boundary map
${\Q}_\ell(\frac n2)
\to
H^{n+1}(X_{\bar \xi},{\Q}_\ell)$
in (\ref{PL}) is the zero-map
and
the character
$\det H^n(X_{\bar \eta},{\Q}_\ell)$
of $I_{\bar \xi}$ is non-trivial.

The composition map

\smallskip

$\Gamma(U_{\frac12},
{\mathcal O})^\times
/(\Gamma(U_{\frac12},
{\mathcal O})^\times)^2
\to
H^1(U_{\frac12},
{\Z}/2{\Z})
\to
{\rm Hom}(I_{\bar\xi},{\Z}/2{\Z})$

\smallskip

\n is 0
since the strict henselization
${\mathcal O}_{\P,\bar \xi}$
containes $\bar {\Q}$
as a subfield.
By (\ref{eqPic}) we thus get 
a map ${\rm Pic}(U_{\frac12})
[2]
\to 
{\rm Hom}(I_{\bar\xi},{\Z}/2{\Z})$.

\smallskip

Since the images of
$[\det H^n(X)]$ and
$[{\rm disc}_d(F)]$
in 
${\rm Hom}(I_{\bar\xi},{\Z}/2{\Z})$
are non-trivial,
the map
${\rm Pic}(U_{\frac12})[2]
\to
{\rm Hom}(I_{\bar\xi},{\Z}/2{\Z})$
is an isomorphism of groups of order 2.
Further by (\ref{eqPic}),
the difference
$[{\rm disc}_d(F)]-
[\det H^n(X)]$
is in the image of the map

\ $\Gamma(U_{\frac12},
{\mathcal O})^\times \!
=
\! {\Z}[\frac12]^\times \ \
\to
\ \ H^1(U_{\frac12},
{\Z}/2{\Z})$.

\ Therefore,
$[\det H^n(X)]$
equals either
$[\pm \, {\rm disc}_d(F)]$
or $[\pm2 \, {\rm disc}_d(F)]$.
We show that the latter case
is not possible.
Let $K$ be the local field of 
${\P}$ at the generic point
of the fiber ${\P}_{{\F}_2}$.
Then, the character
$[\det H^n(X)]$
induces an unramified character
of $\Gamma_K$.
On the other hand,
 $[\pm 2 \, {\rm disc}_d(F)]$
corresponds to a totally ramified
quadratic extension of $K$.
Hence the latter case is excluded
and we obtain
$[\det H^n(X)]=
[\pm \, {\rm disc}_d(F)]$.

It remains to show that the sign is $\varepsilon(n,d)$. To do so, let
${\rm Spec}\
{\R}
\to U$
be the map defined
by the 
homogenous polynomial
$f=T_0^d+\cdots+
T_{n+1}^d$.
Then, since
${\rm disc}_d(f)
=d^{-a(n,d)}{\rm disc}_r(f)>0$,
the pull-back of
$[{\rm disc}_d(F)]$
in 
$H^1({\R},
{\Z}/
2{\Z})$
is $1$.
On the other hand,
by Corollary \ref{cortop}
and by Artin's comparison
theorem relating singular
and \'etale cohomology (\cite[Th\'eor\`eme 4.1]{A}),
the pull-back of
$[\det H^n(X)]$
equals 
$\varepsilon(n,d)
\in H^1({\R},
{\Z}/
2{\Z})
=\{\pm 1\}$.
Thus the assertion
is proved.
\end{proof}

\section{Discriminant modulo 4}\label{s2}

In this section, we keep assuming that
$n$ is even.

Let us first prove the following elementary fact:

\smallskip 

\begin{lm}\label{lm2}
Let $K$ be a complete discrete valuation
field such that $2$ is a uniformizer.
Let $u\in {\mathcal O}_K^\times$
be a unit which is not a square
and let
$L$ denote the quadratic extension
$K(\sqrt u)$.

{\rm 1}.
The extension $L$
is unramified over $K$
if and only if there exists
a unit $v\in {\mathcal O}_K^\times$
such that
$u\equiv v^2 \pmod 4$.

{\rm 2.}
Assume that the extension $L$
is unramified over $K$.
Then, for every unit $v$
satisfying 
$u\equiv v^2 \pmod 2$,
we have
$u\equiv v^2 \pmod 4$.
Further,
the corresponding residue field extension
is given by the Artin-Schreier equation
$t^2+t=w$, where $w$ is the image of $\frac{1}{4}(uv^{-2}-1)$ in the residue field.
\end{lm}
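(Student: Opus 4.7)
The plan is to analyze both parts through the substitution $y = \tfrac12(\sqrt u/v - 1)$, which converts a relation $u = v^2(1 + 4a)$ with $a \in \mathcal O_K$ into the Artin--Schreier equation $y^2 + y = a$: squaring $2y + 1 = \sqrt u/v$ gives $4y^2 + 4y + 1 = uv^{-2} = 1 + 4a$. For the ``if'' direction of Part 1, set $a = (uv^{-2} - 1)/4 \in \mathcal O_K$ and define $y$ as above. The monic polynomial $T^2 + T - a$ has discriminant $1 + 4a = uv^{-2}$, which is a unit of $\mathcal O_K$, so $\mathcal O_K[y]$ is the ring of integers of $L = K(\sqrt u) = K(y)$ and $L/K$ is unramified, with residue extension $k[\bar y]/(\bar y^2 + \bar y - \bar a)$.

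For the ``only if'' direction, assume $L/K$ is unramified. The residue extension $\ell/k$ is separable of degree $2$, and since $k$ has characteristic $2$ it is Artin--Schreier: $\ell = k(\bar y_0)$ with $\bar y_0^2 + \bar y_0 = \bar b$ for some $\bar b \in k$. Lift $\bar b$ to $b \in \mathcal O_K$ and apply Hensel's lemma to $T^2 + T - b$ (whose derivative $2T + 1$ reduces to a unit at $\bar y_0$) to obtain $y \in \mathcal O_L$ with $y^2 + y = b$ and $\bar y = \bar y_0$. Then $(2y + 1)^2 = 4b + 1$, so $L \supset K(\sqrt{4b+1})$, and since $\bar y_0 \notin k$ the element $2y + 1$ lies outside $K$, forcing equality by a degree count. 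Hence $u$ and $4b + 1$ differ by a square in $K^\times$, say $u = c^2(4b+1)$; comparing valuations shows $c$ is a unit, so $u \equiv c^2 \pmod 4$.

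For Part 2, let $v$ be a unit with $u \equiv v^2 \pmod 2$ and put $s = uv^{-2}$, so that $s$ is a unit with $s \equiv 1 \pmod 2$ and $L = K(\sqrt s)$ is still unramified. By Part 1 there exists a unit $v_1$ with $s \equiv v_1^2 \pmod 4$; reducing modulo $2$ gives $\bar v_1^2 = 1$ in $k$, and since $k$ has characteristic $2$ this forces $\bar v_1 = 1$. Writing $v_1 = 1 + 2z$ with $z \in \mathcal O_K$ yields $v_1^2 = 1 + 4(z + z^2) \equiv 1 \pmod 4$, so $s \equiv 1 \pmod 4$, that is $u \equiv v^2 \pmod 4$. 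The residue extension formula then follows from the computation already carried out in the ``if'' direction: with $a = (uv^{-2} - 1)/4$, the element $y = \tfrac12(\sqrt u/v - 1)$ satisfies $y^2 + y = a$ and generates $\mathcal O_L$ over $\mathcal O_K$, so the residue extension is cut out by $t^2 + t = w$ with $w = \bar a$.

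The main obstacle I expect is the Hensel step in the converse of Part 1, where a residual Artin--Schreier equation must be promoted to an exact identity $y^2 + y = b$ in $\mathcal O_L$; once this is secured, everything else flows from elementary manipulation of the identity $(2y + 1)^2 = 4(y^2 + y) + 1$ and a valuation comparison.
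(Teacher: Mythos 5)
Your proof is correct, and its computational heart is the same as the paper's: the substitution $x=v(1+2t)$, i.e.\ $t=\frac12(\sqrt u/v-1)$, which turns $x^2=u$ with $u\equiv v^2\pmod 4$ into the Artin--Schreier equation $t^2+t=\frac14(uv^{-2}-1)$, and the observation that $T^2+T-a$ has unit discriminant $1+4a$, so that $\mathcal O_K[t]$ is \'etale over $\mathcal O_K$. Where you diverge is in the converse of Part 1. The paper argues by the contrapositive in one line: after normalizing to $u=1+2a$, if $a$ is a unit then $t^2+t=a/2$ forces $2\,\ord_L t=-\ord_L 2$, so $L/K$ is totally ramified (and the case where $\bar u$ is not a square in the residue field is disposed of separately, since the residue extension would be radicial). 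You instead run the argument forward: the residue extension of an unramified $L/K$ is separable quadratic in characteristic $2$, hence Artin--Schreier, you lift it by Hensel to $y^2+y=b$ in $\mathcal O_L$, identify $L=K\bigl(\sqrt{4b+1}\bigr)$ by a degree count, and invoke Kummer theory to write $u=c^2(4b+1)\equiv c^2\pmod 4$. Both routes work; the paper's is shorter and purely valuation-theoretic, while yours makes explicit the dictionary between the Kummer generator $\sqrt u$ and the Artin--Schreier generator of the residue extension, which is exactly the content reused in Corollary \ref{cor2}. Your handling of Part 2 (reducing to $s=uv^{-2}\equiv 1\pmod 2$ and using $\bar v_1=1$, $v_1^2\equiv 1\pmod 4$) is the same squaring trick as the paper's closing line $(v/v')^2\equiv 1\pmod 4$.
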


\n [Recall that ``unramified''  implies that the residue extension
is separable.]

\begin{proof}
Let $F$ be the residue field of $K$.
If $\bar u\in F^\times$ is not a square,
the residue field of $L$ 
is a quadratic radicial extension
of $F$.
Hence, by dividing $u$ by a square,
we may assume $u\equiv1\pmod 2$.
We put $u=1+2a$. 
Substituting $x=1+2t$
into the equation $x^2=u$,
we obtain $t^2+t=a/2$. 
If $a$ is a unit,
a solution $t$ cannot be an integer
and 
we have $2 \, {\rm ord}_L t=
-{\rm ord}_L 2$.
Thus $L$ is totally ramified over $K$.
If $a=2b$,
the extension $L$ is unramified
and the residue extension
is given by $t^2+t=\bar b$.

If $v^2\equiv v^{\prime 2}\pmod 2$,
we have $(v/v')^2\equiv 1\pmod 4$
and assertion 2 follows.
\end{proof}

\begin{thm}\label{thm4}
Let $n\geqslant 0$ and $d > 1$
be integers. 
We assume that $n$ is even
and define the sign
$\varepsilon(n,d)= \pm1$
by {\rm (\ref{eqsign})}.
Then, 
$\varepsilon(n,d)\cdot
{\rm disc}_d(F)$
is congruent to a square modulo $4$ in 
$S^m((S^dE)^\vee)
=\Gamma(\P,{\mathcal O}(m))$, where
 $m=(n+2)(d-1)^{n+1}$.
More precisely, 
there exist homogeneous polynomials
$A\in S^{\frac m2}((S^dE)^\vee)$
and
$B\in S^m((S^dE)^\vee)$
satisfying
$\varepsilon(n,d)\cdot
{\rm disc}_d(F)
=A^2+4B$.
\end{thm}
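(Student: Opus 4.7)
My plan is to upgrade Theorem \ref{thmsign} to a congruence modulo $4$ by working at the local field $K$ of $\P$ at the generic point of $\P_{\F_2}$ (the same $K$ that appears at the end of the proof of Theorem \ref{thmsign}) and then descending back to $\Z[C_I]$ using the UFD property of $\F_2[C_I]$. Let $\hat{\mathcal O}$ be the ring of integers of $K$, a complete DVR with uniformizer $2$ and residue field $\F_2(C_I)$. Because ${\rm disc}_d(F)\bmod 2$ is not identically zero (by Proposition \ref{prDem}, since $\F_2$ admits smooth hypersurfaces of degree $d$), the generic point of $\P_{\F_2}$ lies in $U$, so ${\rm Spec}\,\hat{\mathcal O}\to \P$ factors through $U$. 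The character $[\det H^n(X)]$ is defined on all of $\pi_1(U)^{ab}$ via Lemma \ref{lmDe}, so its pullback to $\Gamma_K$ factors through $\pi_1({\rm Spec}\,\hat{\mathcal O})^{ab}=\Gamma_K/I_K$, i.e., is unramified. Pulling back Theorem \ref{thmsign} to $H^1(K,\Z/2\Z)$ then shows that $[\varepsilon(n,d){\rm disc}_d(F)]$ is likewise unramified; since $\varepsilon(n,d){\rm disc}_d(F)$ is a unit in $\hat{\mathcal O}$, Lemma \ref{lm2}.1 yields a unit $v\in\hat{\mathcal O}^\times$ with $\varepsilon(n,d){\rm disc}_d(F)\equiv v^2\pmod 4$ in $\hat{\mathcal O}$.

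Next I would reduce modulo $2$: in the residue field $\F_2(C_I)$, $\overline{\varepsilon(n,d){\rm disc}_d(F)}$ equals $\bar v^2$. Since $\F_2[C_I]$ is a UFD, a square in its fraction field is already a square in the ring, so there is a polynomial $A_0\in\F_2[C_I]$ with $A_0^2=\overline{\varepsilon(n,d){\rm disc}_d(F)}$; the Frobenius identity $A_0^2=\sum A_{0,k}^2$ in characteristic $2$ forces $A_0$ to be homogeneous of degree $m/2$. Lifting $A_0$ monomial-by-monomial gives a homogeneous $A\in S^{m/2}((S^dE)^\vee)$; since $\bar A=A_0\neq 0$ in $\F_2(C_I)$, $A$ is a unit in $\hat{\mathcal O}$ satisfying $\varepsilon(n,d){\rm disc}_d(F)\equiv A^2\pmod 2$, so Lemma \ref{lm2}.2 upgrades this to $\varepsilon(n,d){\rm disc}_d(F)\equiv A^2\pmod 4$ in $\hat{\mathcal O}$.

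Finally I would descend from $\hat{\mathcal O}$ back to $\Z[C_I]$. The mod-$2$ congruence already holds in $\Z[C_I]$, so I can write $\varepsilon(n,d){\rm disc}_d(F)-A^2=2C$ with $C\in\Z[C_I]$ homogeneous of degree $m$. The mod-$4$ congruence in $\hat{\mathcal O}$ gives $C\in 2\hat{\mathcal O}$, so the image of $C$ in the residue field $\F_2(C_I)$ vanishes; since $\F_2[C_I]$ embeds in $\F_2(C_I)$, the reduction of $C$ in $\F_2[C_I]$ is already zero, and thus $C=2B$ for some homogeneous $B\in S^m((S^dE)^\vee)$, giving $\varepsilon(n,d){\rm disc}_d(F)=A^2+4B$. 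The crux of the argument is the unramifiedness step, which relies on the $\ell$-independent construction of Lemma \ref{lmDe}; everything else is bookkeeping once the UFD property of $\F_2[C_I]$ is in hand.
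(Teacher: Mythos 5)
Your proof is correct and takes essentially the same route as the paper: both use Lemma \ref{lmDe} and Theorem \ref{thmsign} to show that the quadratic class of $\varepsilon(n,d)\,{\rm disc}_d(F)$ is unramified over the $2$-adic local field $K$ at the generic point of $\P_{\F_2}$, and then invoke Lemma \ref{lm2} to obtain the congruence modulo $4$ before descending to polynomials. The only difference is cosmetic: where you extract the square root of ${\rm disc}_d(F)\bmod 2$ as a homogeneous polynomial via the UFD property of $\F_2[C_I]$ and the Frobenius, the paper trivializes ${\mathcal O}(m)$ at $\xi$ by an auxiliary section $A_1$ and observes that the local section $vA_1 \bmod 2$ extends globally because its square has no pole.
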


\smallskip

\begin{proof}
Let $K$ be the local field of 
${\P}$ at the generic point $\xi$
of the fiber ${\P}_{{\F}_2}$.
Namely, $K$ is the fraction field of the completion
of the local ring ${\mathcal O}_{\P,\xi}$.
The residue field $F=\kappa(\xi)$
is the function field of $\P_{{\F}_2}$.
Take a global section $A_1\in \Gamma(\P,
{\mathcal O}(\frac m2))$
not divisible by $2$.
The germ of $A_1$ is a basis of
the stalk of ${\mathcal O}(\frac m2)$
at $\xi$.
Since the germ of $\varepsilon(n,d)\cdot
{\rm disc}_d(F)$ 
is also a basis of the stalk 
of ${\mathcal O}(m)$
at $\xi$,
the ratio
$\varepsilon(n,d)\cdot
{\rm disc}_d(F)/A_1^2$ is a unit
at $\xi$. 

By Theorem \ref{thmsign} we have

\begin{equation}
[\det H^n(X)]
=
[\varepsilon(n,d)\cdot
{\rm disc}_d(F)]
\label{eqpm2}
\end{equation}

\noindent in $H^1(U_{\Z[\frac 12]},{\Z}/2{\Z})$.
Since $[\det H^n(X)]$ is the
restriction of an element of
$H^1(U,{\Z}/2{\Z})$,
the extension of $K$ 
generated by the square root of
$\varepsilon(n,d)\cdot
{\rm disc}_d(F)/A_1^2$
is an unramified extension.
Hence by Lemma \ref{lm2}.1,
there exists a unit $v\in 
{\mathcal O}_K^\times$ such that
$\varepsilon(n,d)\cdot
{\rm disc}_d(F)
\equiv v^2\cdot A_1^2
\pmod4$.

We consider the local section
$\bar A=v\cdot A_1
\pmod 2$ of 
${\mathcal O}_{\P_{{\F}_2}}(m)$.
Since its square has no pole,
the local section 
$\bar A$ has the same property and it
defines a global section
$\Gamma(\P_{{\F}_2},{\mathcal O}(m))$.
Let us choose a lifting
$A\in 
\Gamma(\P,{\mathcal O}(m))$
of $\bar A$.
Since $\varepsilon(n,d)\cdot
{\rm disc}_d(F)/A^2
\equiv 1
\pmod2$,
we have $\varepsilon(n,d)\cdot
{\rm disc}_d(F)/A^2
\equiv 1
\pmod4$
by Lemma \ref{lm2}.2.
Namely,
the difference $\varepsilon(n,d)\cdot
{\rm disc}_d(F)
- A^2$ is divisible by $4$
at $\xi$ and hence divisible
on ${\P}$.
\end{proof}

\smallskip
\begin{cor}\label{cor2}
Let $k$ be a field of
characteristic $2$
and $Y$ be a smooth
hypersurface of
even dimension $n$
in ${\P}^{n+1}_k$
defined by a homogeneous
polynomial $f$ of degree $d$.
Let $A(f)\in k^\times$
and $B(f)\in k$ 
denote the specialization
of the polynomials
$A$ and $B$ occurring in Theorem \ref{thm4}.1.
Then, the quadratic
character
$\det H^n(Y_{\bar k},
{\Q}_\ell(\frac n2))$
of $\Gamma_k$
is defined by
the Artin-Schreier
equation $t^2+t=B(f)\cdot A(f)^{-2}$.
\end{cor}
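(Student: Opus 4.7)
My plan is to lift $f$ to characteristic zero, apply Theorem \ref{thmsign} there, and then use Lemma \ref{lm2}.2 to convert the resulting square-root description into an Artin--Schreier description over $k$. Concretely, choose a complete discrete valuation ring $R$ of mixed characteristic with residue field $k$ and uniformizer $2$ (a Cohen ring of $k$ works), and lift the coefficients of $f$ arbitrarily to obtain $\tilde f \in R[T_0,\ldots,T_{n+1}]$ homogeneous of degree $d$. By Theorem \ref{thm4} we have
\[
\varepsilon(n,d)\cdot {\rm disc}_d(\tilde f) \;=\; A(\tilde f)^2 + 4\,B(\tilde f)
\]
in $R$. The hypothesis $A(f)\in k^\times$ forces $A(\tilde f)\in R^\times$, so ${\rm disc}_d(\tilde f)$ is a unit of $R$; by Proposition \ref{prDem}, the hypersurface $\tilde Y\subset \P^{n+1}_R$ defined by $\tilde f$ is smooth over $R$, with special fiber $Y/k$ and generic fiber $\tilde Y_K/K$, where $K={\rm Frac}(R)$. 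The classifying map $\tilde f\colon \Spec R\to U$ thus lands in the universal open $U$ of smooth hypersurfaces.

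Next, apply Theorem \ref{thmsign} over $K$: the quadratic character $\det H^n(\tilde Y_{\bar K},\Q_\ell(n/2))$ of $\Gamma_K$ corresponds to $K(\sqrt{\varepsilon(n,d)\cdot{\rm disc}_d(\tilde f)})/K$, which, since $A(\tilde f)\in R^\times$, equals $K\bigl(\sqrt{1+4B(\tilde f)/A(\tilde f)^2}\bigr)/K$. By Lemma \ref{lm2}.2 applied with $u = 1+4B(\tilde f)/A(\tilde f)^2$ and $v = 1$, this is the unramified quadratic extension of $K$ whose residue field extension of $k$ is the Artin--Schreier extension $t^2+t = B(f)/A(f)^2$.

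Finally, by the construction of $[\det H^n(X)]\in H^1(U,\Z/2\Z)$ in \S\ref{ssmu2}, the character of $\Gamma_K$ just analyzed is the pullback of $[\det H^n(X)]$ along $\Spec K \to \Spec R \xrightarrow{\tilde f} U$, and therefore extends to a character of $\pi_1(\Spec R)^{\rm ab}$. Under the canonical isomorphism $\pi_1(\Spec R)^{\rm ab}\cong \Gamma_k^{\rm ab}$ (as $R$ is complete with residue field $k$), this extension is precisely the Artin--Schreier character $t^2+t=B(f)/A(f)^2$, while its further pullback to $\Spec k$ recovers $\det H^n(Y_{\bar k},\Q_\ell(n/2))$ by the compatibility noted in \S\ref{ssmu2}. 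This yields the corollary; the one delicate input is the lifting step, where the $A^2+4B$ decomposition is exactly what guarantees smoothness of $\tilde Y/R$ in spite of $d$ possibly being even in characteristic $2$.
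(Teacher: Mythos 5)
Your argument is correct and rests on the same ingredients as the paper's own proof: the identity $\varepsilon(n,d)\cdot{\rm disc}_d=A^2+4B$ from Theorem \ref{thm4}, Lemma \ref{lm2} to convert the resulting unramified square-root extension into an Artin--Schreier extension of the residue field, and Theorem \ref{thmsign} to identify the character with $\det H^n$. The only difference is organizational --- you lift the individual $f$ to a Cohen ring and run the local computation there, whereas the paper computes the universal Artin--Schreier class on $U_{{\F}_2}$ (via the local field of $\P$ at the generic point of $\P_{{\F}_2}$) and then specializes to $f$ --- so this is essentially the same approach.
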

\smallskip

\noindent [In other words, the kernel of $\det V : \Gamma_k \to \{\pm1\}$ 
is the
subgroup of $\Gamma_k$ corresponding to the 
Artin-Schreier extension of $k$
defined by 
$t^2+t=B(f)\cdot A(f)^{-2}$.]

\smallskip

\begin{proof}
Since the divided discriminant
${\rm disc}_d(F)$
is a basis of ${\mathcal O}_U(m)$,
the section $A$ in Theorem \ref{thm4}
is a basis of ${\mathcal O}_{U_{{\F}_2}}
(\frac m2)$.
The pull-back of 
$[\det H^n(X)]$ in 
$H^1(U_{{\F}_2},
{\Z}/2{\Z})$
is defined by
$t^2+t=B\cdot A^{-2}$
by Theorem \ref{thm4}
and Lemma \ref{lm2}.
Hence the assertion follows 
by specialization.
\end{proof}

\section{Examples of Discriminants}\label{sEx}

\subsection{Binary forms}
Let $F=C_0T_0^d
+C_1T_0^{d-1}T_1+
\cdots +C_dT_1^d$ 
be the universal binary
polynomial of degree
$d > 1$ defining the universal 
family of finite schemes
of degree $d$ in ${\P}^1$.
The divided discriminant
${\rm disc}_d(F)$
is a homogeneous polynomial
in $(C_i)$
of degree $m=2d-2$
and the sign $\varepsilon(0,d)$
is $(-1)^{d(d-1)/2}$.

The signed discriminant
$\varepsilon(n,d)\cdot
{\rm disc}_d(F)$
is equal to
${\rm dis}_d(F)=
\tilde\Delta(C_0,\ldots,C_d)$
in the notation of
\cite[Chap.\ 4, \S6, n$^{\rm o}$ 7, formula (52)]{Bo}
where the subscript $d$ stands for the degree.
For $d=2$,
we have $\varepsilon(0,2)=-1$
and $\varepsilon(0,2)\cdot
{\rm disc}_d(F)=b^2-4ac$
for $(a,b,c)=(C_0,C_1,C_2)$.

The discriminant
${\rm disc}_d(F)
\in {\Z}
[C_0,\ldots,C_d]$
is the unique
polynomial such that,
if $F=
\prod_{i=1}^d
(u_iT_0-v_iT_1),$
then 
\begin{equation}
{\rm disc}_d(F)
=
\prod_{i\neq j}
(u_iv_j-u_jv_i)
\label{eqbin}.
\end{equation}
Indeed, 
the discriminant
${\rm disc}_d(F)$ is
divisible in the
polynomial ring
${\Z}[u_1,\ldots,u_d,
v_1,\ldots,v_d]$
by the right
hand side
and hence is a constant
multiple of it
since the degrees are equal.
By evaluating at
$u_i=1$ and $v_i=\zeta_d^i$
for a primitive $d$-th root
$\zeta_d$ of $1$,
we get
$F=T_0^d-T_1^d$
and $d^{d-2}
\cdot {\rm disc}_dF=
{\rm disc}_rF=
{\rm res}(dT_0^{d-1},
-dT_1^{d-1})
=(-1)^{d-1}d^{2(d-1)}$.
For the right hand side,
we have
$\prod_{i\neq j}
(\zeta_d^j-\zeta_d^i)=
\zeta_d^{\binom d2(d-1)}
\prod_{i=1}^{d-1}
(1-\zeta_d^i)^d=
(-1)^{d-1}d^d$
and (\ref{eqbin}) follows.

Hence, the signed discriminant
is given by
\begin{equation}
(-1)^{d(d-1)/2}\cdot
{\rm disc}_d(F)
=
\prod_{i< j}
(u_iv_j-u_jv_i)^2;
\label{eqbin2}
\end{equation}
it is congruent to
the square of
$\prod_{i< j}
(u_iv_j+u_jv_i)$
in ${\Z}
[C_0,\ldots,C_d]$
modulo $4$.

\subsection{Quadrics}
Let $F
=\sum_{0\leqslant i\leqslant 
j\leqslant n+1}C_{ij}T_iT_j$
be the universal quadratic
polynomial defining the universal
family of quadrics
of even dimension $n\geqslant 0$.
The divided discriminant
${\rm disc}_d(F)$ is the same as the resultant discriminant ${\rm disc}_r(F)$;
it is a homogeneous polynomial
in $(C_{ij})$
of degree $m=n+2$
and the sign $\varepsilon(n,2)$
is $(-1)^{(n+2)/2}$.

Let  $A=(A_{ij})$ be the $m \times m$ symmetric matrix
with coefficients
in $\Z[C_{ij};\ 0\leqslant i\leqslant
j\leqslant n+1]$ defined
by $A_{ij}=A_{ji}=C_{ij}$
for $i< j$ and
$A_{ii}=2C_{ii}$.
We have 
$TA\!\ ^tT=2F$
where $T$
is the row vector
$\begin{pmatrix}
T_0 & \ldots& T_{n+1}
\end{pmatrix}$
and \begin{equation}
{\rm disc}_d(F)= {\rm disc}_r(F)
=
\det A.
\label{eqQ}
\end{equation}
Indeed, by inverting $2$,
we see that
$\det A$ equals the divided discriminant
${\rm disc}_d(F)$
up to a constant $\lambda$ which is $\pm$ a power of 2.
For the unit quadratic form
$\sum_{i=0}^{n+1}T_i^2$,
we have $ {\rm disc}_r(F) = 2^{n+2} = \det A$ hence $\lambda = 1$. This proves (\ref{eqQ}).

Let $k$ be a field
of characteristic $\neq2$ and let
$Q$ be a smooth quadric
of even dimension $n$
in ${\P}^{n+1}_k$
defined by a non-degenerate
quadratic form $q(x)=\
\!\!^t\! xBx$
for a symmetric matrix
$B\in M_{n+2}(k)$.
Then, the
character $\det
H^n(Q_{\bar k},{\Q}_\ell
(\frac n2))$
is defined by the
square root of
the signed discriminant
$\varepsilon(n,2) 
{\rm disc}_d\ q
=(-1)^{\frac{n+2}2}
2^{n+2}\det B$;
this follows from 
(\ref{eqQ}) applied to $A=2B$.

\subsection{Plane cubics}
Let $C\to \P={\P}((S^3E)^\vee)$ 
be the universal
family of cubic curves
defined by the universal cubic
polynomial $F$
for $n=1$ and $d=3$.
The divided
discriminant ${\rm disc}_d(F)$
may be viewed as a section of
$\Gamma(\P,{\mathcal O}(12))$.
We compare it with the discriminant
of the Jacobian
defined by a Weierstrass equation.

Let $p\colon
{\mathbb P}\to \P$
denote the ${\P}^2$-bundle
${\P}({\mathcal E}^\vee)
={\rm Proj}S^\bullet
{\mathcal E}^\vee$
defined by the dual
of ${\mathcal E}
=
{\mathcal O}(2)
\oplus
{\mathcal O}(3)
\oplus
{\mathcal O}$
and let
$X\in \Gamma({\mathbb P},
p^*{\mathcal O}(2)
\otimes 
{\mathcal O}(1)),
Y\in \Gamma({\mathbb P},
p^*{\mathcal O}(3)
\otimes 
{\mathcal O}(1)),
Z\in \Gamma({\mathbb P},
{\mathcal O}(1))$
denote the components
of the tautological map
$p^*{\mathcal E}^\vee
\to {\mathcal O}(1)$.
For $i=1,2,3,4,6$,
homogeneous polynomials
$a_i(F)\in S^i(S^3E)^\vee
=\Gamma(\P,{\mathcal O}(i))$ 
of degree $i$
of the coefficients of $F$
are defined 
in \cite[(1.6)]{AVT}.
Let $W$ be the
closed subscheme of ${\mathbb P}$
defined by the homogeneous
Weierstrass equation
\begin{equation}
E\colon Y^2Z+a_1XYZ+a_3YZ^2 -(
X^3+a_2X^2Z+a_4XZ^2+a_6Z^3) = 0.
\label{eqW}
\end{equation}
Then 
the functor
${\rm Pic}^0_{C/\P}$
is represented by 
the smooth locus $J$ of $W$
\cite[Theorem 1]{AVT}.

Let ${\rm disc}_e(E)
\in \Gamma(\P,{\mathcal O}(12))$
denote
the discriminant of the Weierstrass
equation (\ref{eqW})
in the standard sense
defined as a polynomial of
$a_i(F)$, cf.\ \cite[(1.3)]{AVT}.
Then, we have
\begin{equation}
{\rm disc}_d(F)=- \, 
{\rm disc}_e(E).
\label{eqE}
\end{equation}
Indeed,
since both
${\rm disc}_d(F)$
and
${\rm disc}_e(E)$
are bases on
the smooth locus
$U=\P \! \sm \! D$
and $\Gamma(U,{\mathcal O}^\times)
={\Z}^\times
=\{\pm 1\}$,
we have
${\rm disc}_d(F)=
\pm \,
{\rm disc}_e(E)$.
We determine the sign
by testing it on the Fermat cubic
$f=x^3+y^3+z^3$.
By (\ref{eqadn}),
we have
${\rm disc}_d(f)=
3^{-3}{\rm disc}_r(f)=
3^{-3}\cdot 3^{4\cdot 3}=3^9$.
On the other hand,
since $c_4$ and $c_6$ 
(\cite[(1.3)]{AVT})
are given by
$c_4=0$ and
$c_6=2^3\cdot 3^6$,
we obtain
${\rm disc}_e(E)=
- \, 3^9$.

\subsection{Cubic surfaces}

We put $A={\Z}
[a,b,c,d,e]$
and ${\mathbb P}
={\rm Proj}\ A$.
Define a closed
subscheme $S$ of
${\P}^4
\times {\mathbb P}
={\rm Proj}\
{\Z}[x,y,z,u,v]
\times {\mathbb P}$
by the Sylvester-type equations
\begin{equation}
x+y+z+u+v=0,\
ax^3+by^3+cz^3+du^3+ev^3=0.
\label{eqSil}
\end{equation}
It is known that
a general enough cubic
equation in characteristic $0$
can be put in
that form after a suitable finite extension
of the ground field,
cf.\ \cite[\S 9.4.1]{Dol}.
In 1862, Salmon (\cite[\S543]{Sa}) showed that, over {\bf C}, 
the corresponding cubic surface is smooth
if and only if a certain polynomial 
${\rm disc}_s(a,b,c,d,e)
\in A$ is non-zero. This polynomial is homogeneous of degree $32$
in $a,b,c,d,e$. Its definition is:
\begin{equation}
{\rm disc}_s(a,b,c,d,e)= ((s^2-64rt)^2-4t^3p)^2 - 2^{11}(8t^6q+t^4s(s^2-4rt)),
\label{eqSe}
\end{equation}
\noindent where \ $p=a+b+c+d+e,\ q = ab + \cdots,  \ r = abc + \cdots,  \ s = abcd + \cdots, \ t=abcde$, \ are the elementary
symmetric functions of  $a,b,c,d,e$. [We give here the corrected 
version of the formula due to W.L. Edge, cf.\ \cite{Ed} and \cite {CS}; there were numerical mistakes in \cite{Sa}.] 

By eliminating
one variable in
(\ref{eqSil}),
one obtains a cubic
polynomial $F_s$ with
coefficients in $A$ and 
its divided discriminant
${\rm disc}_d(F_s)$ is a well-defined element of $A$. The relation between the Salmon discriminant and the divided one is:

\begin{equation}
  {\rm disc}_s(a,b,c,d,e) = 3^{-27}{\rm disc}_d(F_s).
\label{ss}
\end{equation}

\smallskip

Indeed,
the polynomial 
${\rm disc}_s(a,b,c,d,e)$
is geometrically irreducible
\cite[Lemma 2.5]{CS}
and 
the smooth locus of
$S_{\Q}\to 
{\mathbb P}_{\Q}$ is
defined by 
${\rm disc}_s(a,b,c,d,e)
\neq 0$, cf.\ \cite{Sa}, loc.cit.,  and 
\cite[Corollary 2.10]{CS}.
This implies
that 
${\rm disc}_s(a,b,c,d,e)$
is a constant 
multiple
of ${\rm disc}_d(F_s)$.
We determine the constant
by testing it on the Fermat cubic
$$f=y^3+z^3+u^3+v^3$$
corresponding to
$(a,b,c,d,e)
=(0,1,1,1,1)$.

By (\ref{eqSe}), we have
${\rm disc}_s(0,1,1,1,1)
=1$.
On the other hand,
we have
${\rm disc}_d(f)=
3^{-5}{\rm disc}_r(f)
=3^{-5}3^{32}=3^{27}$.
This proves formula (\ref{ss}).

Since
${\rm disc}_s(1,1,1,1,1)
=-3^55$
for the Clebsch surface
corresponding to
$(a,b,c,d,e)
=(1,1,1,1,1)$,
the equality (\ref{ss})
implies 
${\rm disc}_d(f)=
-3^{32}5$
for the corresponding
polynomial $f =  \sum x^3 \ - \ (\sum x)^3$. Since $f$ is divisible by $3$, this means that ${\rm disc}_d(f/3)= -5$, which fits with the fact that $f/3$ defines a smooth cubic in every characteristic except $5$.
See also 
\cite[Remark 2.11]{CS}.

Let $S\subset {\P}^3_k$
be a smooth cubic surface
defined by a cubic form $f\in
S^3E_k$.
Then
$H^2(S_{\bar k},
{\Q}_\ell(1))$
is spanned by the classes
of the 27 lines 
\cite[p.588]{We}, \cite[\S27]{Ma}.
The group of automorphisms of
the ${\Z}$-lattice spanned by
the classes of these
lines permuting them and
preserving the
intersection form
is isomorphic 
to the Weyl group $W(E_6)$
of the root system $E_6$.
The kernel of the
 determinant map
$W(E_6)\to \{\pm1\}$
is a simple group
of order 25920.

The action of $\Gamma_k$
on the 27 lines
defines a homomorphism
$$\Gamma_k \to W(E_6),$$
unique up to conjugation.
By applying Theorem \ref{thmsign}
and using specialization,
we see that {\it the composition
$\Gamma_k\to W(E_6)\to \{\pm1\}$
is defined by
the square root
of the signed discriminant}
$- \! {\rm \ disc}_d f$.
Note that for a cubic surface, 
we have $n=2, d=3$ so that $\varepsilon(n,d) = -1$.
In the case where
the characteristic of $k$
is not 3,
this is equivalent to
\cite[Theorem 2.12]{CS}
which is stated in terms of the Salmon discriminant
${\rm disc}_sf$.

The formula (\ref{eqSe})
together with (\ref{ss})
and $\varepsilon(2,3)=-1$
implies the congruence
$\varepsilon(2,3)
{\rm disc}_d(F_s)\equiv
- 3 s^4\pmod 8$.
Consequently,
for a cubic surface
in Sylvester form
(\ref{eqSil})
in characteristic $2$,
the determinant map $\Gamma_k \to  \{\pm1\}$ is trivial if and only if 
$k$ contains ${\F}_4$.

\end{document}